\documentclass[]{article}

\usepackage{graphicx} 
\usepackage[a4paper,left=35mm,right=35mm,top=30mm,bottom=30mm,marginpar=25mm]{geometry}
\usepackage{amsmath}
\usepackage{amsthm}
\usepackage{amssymb}
\usepackage{authblk}
\usepackage{tikz, pgf}
\usepackage{xcolor} 
\usepackage[displaymath,mathlines]{lineno}
\usetikzlibrary{decorations.markings}
\usepackage[english]{babel}
\usepackage[T1]{fontenc}
\usepackage[utf8]{inputenc}
\usepackage{amsfonts}
\usepackage{mathrsfs}
\usepackage{xfrac}
\usepackage{lmodern}
\DeclareFontFamily{OMX}{lmex}{}
\DeclareFontShape{OMX}{lmex}{m}{n}{<-> lmex10}{}
\usepackage{fix-cm}
\usepackage{listings}
\usepackage{fancyvrb}
\usepackage{enumerate}   
\usetikzlibrary{arrows}
\usepackage{float}
\usepackage{subcaption}
\usepackage{hyperref}
\usepackage{bigints}



\renewcommand{\div}{\operatorname{div}}

\newcommand{\R}{{\mathbb{R}}}

\newcommand{\lra}{\longrightarrow}

\newcommand{\ep}{\varepsilon}

\DeclareMathOperator*{\esssup}{ess\,sup}

\newcommand{\cB}{{\mathcal B}}

\newcommand{\cA}{{\mathcal A}}

\newcommand{\cL}{{\mathcal L}}

\newcommand{\cM}{{\mathcal M}}

\renewcommand{\phi}{\varphi}
\renewcommand{\a}{{\alpha}}
\renewcommand{\b}{{\beta}}

\renewcommand{\d}{{\delta}}

\newcommand{\s}{{\sigma}}

\newcommand{\supp}{\mathrm{supp}\;}

\usepackage{fancyhdr}

\newtheorem{theorem}{Theorem}
\numberwithin{theorem}{section}
\newtheorem{proposition}{Proposition}

\newtheorem{lemma}{Lemma}

\newtheorem{definition}{Definition}

\newtheorem{remark}{Remark}
\providecommand{\keywords}[1]{\textbf{Keywords.} #1}
\providecommand{\mathsubjclass}[2]{\textbf{1991 Mathematics Subject Classification.} #1}

\author{Fabio Bagagiolo\thanks{University of Trento, Department of Mathematics, Via Sommarive 14, 38123 Povo (TN).\\
This author was partially supported by MUR-PRIN2020 Project (No. 2020JLWP23) “Integrated
mathematical approaches to socio-epidemiological dynamics”.}, Rossana Capuani\thanks{University of Arizona, Department of Mathematics, 617 N. Santa Rita Ave., Tucson, USA.\\
This author was partially supported by INdAM-GNAMPA Project 2023 GNAMPA CUP\_E53C22001930001 entitled ``Mean field games methods for mobility and sustainable development'' and by the co-financing of European Union-FSE-REACT-UE, PON ``Ricerca e
Innovazione 2014-2020, Azione IV.6 Contratti di ricerca su tematiche Green''.}, Luciano Marzufero\thanks{``La Sapienza'' University of Rome, Department of Basic and Applied Sciences for Engineering, Via Antonio Scarpa 16, 00161 Rome (RM). \\This work was supported by the Gruppo Nazionale per l'Analisi Matematica e le loro Applicazioni (GNAMPA-INdAM).}}
%
%
%

\date{}

\begin{document}

\title{\Large \bf A single player and a mass of agents:\\ a pursuit evasion-like game}
\maketitle
\begin{abstract}
We study a finite-horizon differential game of pursuit-evasion like, between a single player and a mass of agents. The player and the mass directly control their own evolution, which for the mass is given by a first order PDE of transport equation type. Using also an adapted concept of non-anticipating strategies, we derive an infinite dimensional Isaacs equation, and by dynamic programming techniques we prove that the value function is the unique viscosity solution on a suitable invariant subset of a Hilbert space.
\end{abstract}
\keywords{pursuit-evasion games; differential games; infinite-dimensional Isaacs equation; continuity equation; mass transportation; mean-field, viscosity solution.}
\par\smallskip\noindent
\mathsubjclass{Primary: 49N70; Secondary: 49N75; 49L12; 49L25; 49N80; 35Q49.}
\par\bigskip\bigskip\bigskip\noindent
\section{Introduction}
%
The goal of this work is the study of a zero-sum differential game between a single player and a population of agents. In particular, we are interested to the corresponding Hamilton-Jacobi-Isaacs equation.
\par
Both the single player and the mass of the agents are moving in $\mathbb{R}^d$, by the following controlled dynamics (respectively: ordinary differential equation for the single player and continuity partial differential equation for the mass):
\begin{equation}
\label{eq:intro_dynamical_systems}
y'(t)=f(y(t),\alpha(t)),\qquad m_t(x,t)+\mbox{div}(\beta(x,t)m(x,t))=0,
\end{equation}
where the dynamics $f:\mathbb{R}^d\times A\lra\mathbb{R}^d$, and the controls $\alpha:[0,T]\lra A$, $\beta:\mathbb{R}^d\times[0,T]\lra\mathbb{R}^d$ are suitable functions. Here, $m:\mathbb{R}^d\times[0,T]\lra[0, +\infty[$ stands for the time-dependent density of the distribution of the mass of the agents. The payoff function is described as
\begin{equation}
\label{eq:intro_cost}
J(x,m_0,t,\alpha,\beta)=\int_t^T\ell\big(y(s),m(\cdot,s),s,\alpha(s),\beta(\cdot,s)\big)ds+\psi\big(y(T),m(\cdot,T)\big),
\end{equation}
where the running cost $\ell$ and the final cost $\psi$ are suitable functions whose dependence on $m$ and $\beta$ is given by their actual interpretations as state-dependent functions $m(\cdot,s):\mathbb{R}^d\lra[0, +\infty[$ and $\beta(\cdot,s):\mathbb{R}^d\lra\mathbb{R}^d$. 
\par
The goal of the single player is to minimize $J$ and the goal of the mass of the agents is to maximize $J$.
\par
Our main motivation for this kind of study is the possible future application to a differential game between two distinct masses of agents, setting the problem in the framework of the mean-field games theory. In such a differential game, the single agent of one of the two masses should play a zero-sum differential game against the other mass, say $m_Y$, and a mean-field game looking at the evolution of the mass it belongs to, say $m_X$. Hence, it should infer the evolution of $m_X$ and, giving it as a datum, play a zero-sum differential game against the other mass $m_Y$, with payoff
$$
\int_t^T\ell\big(y(s),m_X(\cdot,s),m_Y(\cdot,s),\alpha(s),\beta(\cdot,s)\big)ds+\psi\big(y(T),m_X(\cdot,T),m_Y(\cdot,T)\big).
$$
Being $m_X$ as given, the payoff of the game played by the single agent against the mass $m_Y$ is of the type \eqref{eq:intro_cost}. However, a possible formulation and study of such a mean-field (differential) game problem go beyond the purposes of the present article and seem to present other several crucial issues, both from a modeling and analytical point of view.
\par
Then it is natural to start such an investigation project by the study of the game between a single player and a mass. Actually, such a problem is not well investigated in the literature, especially as concerns the derivation and the study of the corresponding Isaacs equation which naturally turns out to be an infinite dimension equation, due to the presence of the state-space dependent function $m(\cdot,s)$. Hence, its mathematical interest goes beyond the possible application to the mean-field games framework, and moreover it has interesting applicative motivations. 
\par
In view of the Dynamic Programming Principle, as it is standard for differential games, we are going to use the concept of non-anticipating strategies (\cite{ellkal, BCD}). In particular, we are concerned with the ``$\inf\sup$'' problem only, that is we study the lower value function
$$
\underbar V(x,m_0,t)=\inf_{\gamma}\sup_{\beta}J(x,m_0,t,\gamma[\beta],\beta).
$$
Here, $\gamma$ is any non-anticipating strategy for the single player, that is a function from the set of the controls $\beta$ for the mass to the set of the controls $\alpha$ for the single player, such that
$$
\beta_1(\cdot,s)=\beta_2(\cdot,s)\ \mbox{a.e. } s\in[t',t'']\ \mbox{as functions from } \mathbb{R}^d\ \mbox{to } \mathbb{R}^d
$$
$$
\Downarrow
$$
$$
\gamma[\beta_1](s)=\gamma[\beta_2](s)\ \mbox{a.e. } s\in[t',t''].
$$
Denoting by $\tilde{\cal B}$ a suitable set of functions from $\mathbb{R}^d$ to itself (see \eqref{campispaziali}), and assuming that, for all controls $\beta$, it is $\beta(\cdot,t)\in\tilde{\cal B}$ for almost every $t$, then, under suitable hypotheses, the Isaacs equation turns out to be
\begin{equation}
\label{eq:intro_Isaacs}
-u_t+\min_{b\in\tilde{\cal B}}\max_{a\in A}\left\{-f(x,a)\cdot D_xu+\langle D_mu, \div(bm)\rangle_{L^2}-\ell(x,m,t,a,b)\right\}=0,
\end{equation}
where the unknown function $u$ is a function of $(x,m,t)\in\mathbb{R}^d\times L^2(\mathbb{R}^d)\times[0,T]$, and $\langle\cdot,\cdot\rangle_{L^2}$ is the scalar product in $L^2(\R^d)$.
\par
In this paper, we prove that $\underbar V$ is the unique viscosity solution $u$ of \eqref{eq:intro_Isaacs} satisfying $u(x,m,T)=\psi(x,m)$.
\par\smallskip
We notice that with similar techniques we can also study the problem for
$$
\overline{\text{V}}(x,m_0,t)=\sup_{\d}\inf_{\a}J(x,m_0,t,\a,\d[\a]),
$$
where $\a$ is the control for the single player and $\d$ are the non-anticipating strategies for the mass. However, from a modeling point of view, it seems less reasonable because it probably gives too much emphasis in the possible centralized feature of the decisions of the mass. Anyway, whenever the Hamiltonian in \eqref{eq:intro_Isaacs} satisfies the standard Isaacs condition, then, by the uniqueness result, we would have the existence of a value of the game.
\par
As far as the domain of the equation \eqref{eq:intro_Isaacs} is concerned, with respect to the triple $(x,m,t)$, we are able to restrict it to a  suitable set $\tilde X$ which is viable under the evolution \eqref{eq:intro_dynamical_systems} and, more important, is compactly embedded in $\mathbb{R}^d\times L^2(\mathbb{R}^d)\times[0,T]$. To this end, we first need a careful analysis of the solution of the continuity equation in \eqref{eq:intro_dynamical_systems}. Indeed, it is usual in the literature to interpret it in the sense of measures: a solution is a time dependent measure $\mu$ on $\mathbb{R}^d$, whose evolution is continuous in a suitable metric space of measures. Here instead, we want to interpret the solution as a function, i.e. the density of the measure $\mu$ when it is absolutely continuous with respect to the Lebesgue measure. It is known, indeed, that whenever the initial datum $\bar\mu$ is a measure with density $\bar m$, then the solution $\mu(t)$ is also a measure with density $m(\cdot,t)$. An important part of the present paper is then a preliminary direct and deep analysis of the evolution $t\longmapsto m(\cdot,t)$, together with the right choice of the admissible set of controls $\beta$ in view of the subsequent study of the infinite dimensional Isaacs equation \eqref{eq:intro_Isaacs}. Hence Section \ref{continuityeq} contains all such necessary results that, even if somehow and sometimes not surprising, we account interesting by themselves and are not well found in the literature.
\par
The originality and the interest of the Isaacs equation \eqref{eq:intro_Isaacs} is given by its infinite dimensional feature which, to the best of the authors' knowledge, was not studied yet in the framework of the viscosity solution theory (we refer the reader to \cite{BCD} for a comprehensive account of viscosity solutions theory in finite dimension and to the seminal papers \cite{crandall1, crandall2} for the beginning of the theory in infinite dimension). We also notice that first order Hamilton-Jacobi equations in infinite dimension, arising from optimal control problems with controlled dynamics given by a first order PDE as the one in \eqref{eq:intro_dynamical_systems}, are not so widely studied in the literature. Finally, we have to strongly point out that the finite horizon/time dependence feature of the differential game being accounted, even in the simpler finite dimensional case, is rather a newness as for dynamic programming and Isaacs equation.
\par\smallskip
The outline of the paper is the following: in Section \ref{continuityeq}, we study the properties of the evolution $t\longmapsto m(\cdot,t)$. In section \ref{dynprogreg}, we introduce the differential game, write down the Dynamic Programming Principle and study the regularity properties of the lower value function. In Section \ref{HJIforV}, we derive the Isaacs equation, we prove that the lower value function is a viscosity solution of the corresponding final value problem and we finally prove the comparison theorem which gives the uniqueness of the solution. In Section \ref{esempio1dim} we add some comments on two one-dimensional examples.
\par\smallskip
The present Introduction then briefly concludes with a related literature review and comparisons with it.
\par
Recently there has been some interest on Hamilton-Jacobi equations arising from optimal control problems and differential games of pursuit-evasion type, which concern masses and/or measures evolutions and the corresponding controlled continuity equation, see for instance \cite{cardquin, gangbo, marquin, cossopham, jimmarquin, moonbasar, cavmarquin, marigondacapuani}. In particular in \cite{marquin} a Mayer-type differential game between two masses, somehow similar to the present one (even if with just one player and one mass), is studied. In that paper, as in \cite{cardquin, gangbo, cossopham, jimmarquin, moonbasar, cavmarquin, marigondacapuani}, the authors work in the space of probability measures with Wasserstein distance (see \cite{AmbrGigli} for a comprehensive account). This fact poses the use of some very weakened concepts for the solution of the continuity equation like the evolutions of measures concentrated on the trajectories. Despite the generality of such concepts, our model, although less weakened, goes to the direction of a Hilbert functional analysis, which, besides its complete novelty, may be anyway useful especially for possible applications and the synthesis of the optimal feedback. We notice that our stronger regularity of the solutions just comes from the regularity assumptions (in space but not in time) on the available controls for the mass (see \eqref{vectorfields}). In particular, the regularity in time of the controls is the standard and necessary one for the validity of the Dynamic Programming Principle and for the derivation of the corresponding Hamilton-Jacobi equation (for example the possibility of gluing, at a fixed instant, different controls). On the contrary, the assumed regularity in space is a choice of the model, as a possible free choice of the agents. It does not affect the derivation and the validity of the equations, it may be closer to possible real applications and  it allows to study the Hamilton-Jacobi-Isaacs equation in an Hilbert space, which is, as already said and due to its infinite dimensional feature, one of the major novelties of the paper. We remark that the two examples in \S5 suggest the use, for the mass, of controls that are continuous in space. In some particular cases and under suitable hypotheses, the more regular problem can then be seen as an approximation of a less regular problem.

\par
As far as the authors know, this is the first time that a similar approach is adopted. Compared to \cite{marquin}, our model also differs from the presence of a running cost depending on all the state, time and control variables.
\par
Other models of differential games, also in a mean-field type framework, dealing with continuity equations and measures evolutions can be found in \cite{achdou, cirantverzini, hung, aver, colombogaravello, carmona}. We also refer the reader to \cite{sun, ibragimov1, ibragimov2} for further pursuit-evasion like games of multiple pursuers and one evader and to \cite{carmonazhu, lasrylions, cardcirpor} for mean-field games with a major player. Possible applications are investigated in \cite{kolokoltsov1, kolokoltsov2} as regards computer science and cyber-security, and in \cite{ZhouXu, wang} as regards engineering and machine learning.

\section{On the continuity equation for the mass}
\label{continuityeq}
The evolution of a mass on $\R^d$ can be represented by the evolution $t\longmapsto\mu(t)$ of a measure on $\R^d$, where the quantity of mass in $A\subset\R^d$ at time $t$ is the measure of $A$ at the same time: $\mu(t)(A)$. When the mass population is moving according to a given time-dependent vector field $\beta:\R^d\times[0, T]\lra\R^d$, then, at least formally, the evolution of the measure $\mu$ satisfies the following so-called continuity equation
\begin{equation}
\label{conteq}
\begin{cases}
\frac{\partial\mu}{\partial s}(x, s) + \div(\beta(x,s)\mu(x, s))=0,&(x, s)\in\mathbb{R}^d\times]t, T[\\
\mu(\cdot, t)=\bar{\mu}
\end{cases},
\end{equation}
where $T>0$ is the final time, $\bar\mu$ is the value of the measure at the initial time $t$ and, here and in the sequel, $\div$ means the spatial divergence. In \eqref{conteq} we have included the variable $x$ in the notation of $\mu$ with the meaning that for any time $s$, it is a measure on $\R^d$ whose elements are denoted by $x$. Equation \eqref{conteq} comes from conservation of mass arguments and for a general account, in particular for its possible weak formulations in the sense of measures, we refer the reader to \cite{DiPernaLions} and \cite{Amb2004, Amb2008, AmbCri2014}.
\par
The well-posedness of \eqref{conteq} is strongly related to well-posedness of the Cauchy problem
\begin{equation}
\label{ode}
\begin{cases}
y'(s)=\beta(y(s), s),&s\in]t, T[\\
y(t)=x
\end{cases}.
\end{equation}
In the sequel, we will denote the flow associated to \eqref{ode} by $\Phi(x, t, s)$, which is defined as $y(s)$ where $y$ is the solution of \eqref{ode} with the given $(x, t)\in\R^d\times[0, T]$. 
As it is known, if $\beta\in L^1([0, T], W^{1, \infty}(\R^d, \R^d))$, then system \eqref{ode} has a unique solution for any initial data $(x, t)$. For our future purposes, here we assume the stronger following hypothesis: the time-dependent controls $\beta$ are functions from $[0, T]$ to the set $\tilde\cB$, which, for a given fixed $M>0$, is defined as
\begin{multline}
\label{campispaziali}
\tilde\cB:=\left\{b\in W^{2, \infty}(\R^d, \R^d)\cap H^1(\R^d, \R^d):\right.\\
\left.\|b\|_{L^{\infty}(\R^d, \R^d)}\leq M,\ \|b\|_{H^1(\R^d, \R^d)}\leq M,\ \|\div b\|_{W^{1, \infty}(\R^d)}\leq M\right\},
\end{multline}
and moreover they belong to 
\begin{equation}
\label{vectorfields}
\cB:=\{\beta\in L^2([0, T], W^{2, \infty}(\R^d, \R^d)\cap H^1(\R^d, \R^d)):\beta(\cdot, t)\in\tilde\cB\ \text{ for a.e. }t\}.
\end{equation}
\begin{remark}
\label{remark1}
As we have already said in the Introduction, one of our main goals is to study the problem in a Hilbert setting, and hence \eqref{campispaziali} and \eqref{vectorfields} guarantee a good regularity of the trajectories in Hilbert space. In particular we have $\div\beta\in L^2([0, T], W^{1, \infty}(\R^d))$. Moreover, since $W^{2, \infty}(\R^d, \R^d)\subset C^1(\R^d, \R^d)$, for any $(t, s)\in[0, T]\times]t, T]$ the function $\Phi(\cdot, t, s):\R^d\lra\R^d$ is invertible, Lipschitz continuous and $C^1$, together with its inverse, with the Lipschitz constant independent on $t$ and $\beta\in\cB$.
\par
We recall that, for every $t\in[0, T]$ and $s\in[t, T]$, the inverse of the flow $\Phi(\cdot, t, s)$ is defined as $\Phi^{-1}(x, t, s)=y$, where $y$ is such that, starting from $y$ at time $t$ with the forward flow $\Phi$, we arrive in $x$ at time $s$. Still holding our hypotheses, in the sequel we will use the fact that $\Phi^{-1}(\cdot, t, s)$ is Lipschitz continuous w.r.t. $s$ independently on $x, t, \beta$.
\par
Furthermore, if all the fields $\beta$ have the support contained in the same bounded subset of $\R^d$, then the $L^2$-closure $\overline{\tilde\cB}$ of \eqref{campispaziali} is a compact subset of $L^2$.
\end{remark}
\begin{proposition}
Under hypothesis \eqref{vectorfields}, for any Borel measure $\bar\mu$ the solution of the continuity equation \eqref{conteq} in the sense of distribution is given by the push-forward
\begin{equation}
\label{solutioneqcont}
\mu(\cdot, s)=\Phi(\cdot, t, s)\sharp \bar\mu,
\end{equation}
where, for any Borel measurable set $A\subset\R^d$, $\mu(\cdot, s)(A)=\bar\mu(\Phi^{-1}(\cdot, t, s)(A))$.
\end{proposition}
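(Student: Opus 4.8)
The plan is to verify directly that the push-forward measure $\mu(\cdot,s)=\Phi(\cdot,t,s)\sharp\bar\mu$ satisfies the weak (distributional) formulation of \eqref{conteq}, by transporting every integral onto the fixed initial measure $\bar\mu$ through the flow and then recognizing the resulting integrand as a total $s$-derivative. First I would record the formulation to be established: for every test function $\varphi\in C^\infty_c(\R^d\times[t,T[)$ (compactly supported in $x$ and vanishing near $s=T$),
\begin{equation}
\label{weakform}
\int_t^T\!\!\int_{\R^d}\big(\partial_s\varphi+\beta(\cdot,s)\cdot\nabla_x\varphi\big)\,d\mu(\cdot,s)\,ds+\int_{\R^d}\varphi(\cdot,t)\,d\bar\mu=0,
\end{equation}
which follows from $\partial_s\mu+\div(\beta\mu)=0$ by pairing with $\varphi$ and integrating by parts once in space and once in time; the $s=T$ boundary term drops because $\varphi$ is compactly supported in time, and the $s=t$ term produces $\int_{\R^d}\varphi(\cdot,t)\,d\bar\mu$ via the initial condition.

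The key step is the change of variables. By the definition of push-forward, for any bounded Borel $g$ one has $\int_{\R^d}g\,d\mu(\cdot,s)=\int_{\R^d}g(\Phi(y,t,s))\,d\bar\mu(y)$; applying this with $g=\partial_s\varphi(\cdot,s)+\beta(\cdot,s)\cdot\nabla_x\varphi(\cdot,s)$ rewrites the double integral in \eqref{weakform} as an integral against $\bar\mu$ of the composed integrand evaluated along the trajectory $s\mapsto\Phi(y,t,s)$. Now, for fixed $y$, I would differentiate $s\mapsto\varphi(\Phi(y,t,s),s)$: by the chain rule together with the defining ODE \eqref{ode}, namely $\partial_s\Phi(y,t,s)=\beta(\Phi(y,t,s),s)$ for a.e. $s$, this derivative equals exactly $\big(\partial_s\varphi+\beta\cdot\nabla_x\varphi\big)(\Phi(y,t,s),s)$. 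Thus the integrand is a perfect $s$-derivative along each flow line.

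Finally I would exchange the $s$-integral with the $\bar\mu$-integral (justified by Tonelli, since the compact support of $\varphi$ in $x$ and the uniform bounds of Remark \ref{remark1} make the integrand bounded with compact support in $(x,s)$) and apply the fundamental theorem of calculus in $s$ on each trajectory. The inner integral telescopes to $\varphi(\Phi(y,t,T),T)-\varphi(\Phi(y,t,t),t)=-\varphi(y,t)$, using $\varphi(\cdot,T)\equiv0$ and $\Phi(y,t,t)=y$; integrating against $\bar\mu$ gives $-\int_{\R^d}\varphi(\cdot,t)\,d\bar\mu$, which cancels the second term in \eqref{weakform}. Hence \eqref{weakform} holds for all admissible $\varphi$, proving \eqref{solutioneqcont}.

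I expect the main obstacle to be the analytic justification of the two interchanges just used, given only $L^2$-in-time regularity of $\beta$. Since $\beta\in\cB$ is merely $L^2$ in time, the flow $s\mapsto\Phi(y,t,s)$ is absolutely continuous but not $C^1$, so both the chain-rule differentiation and the fundamental theorem of calculus must be read in the a.e./absolutely continuous sense. The spatial $W^{2,\infty}$ regularity and the uniform Lipschitz bounds of Remark \ref{remark1} are precisely what guarantee that $s\mapsto\varphi(\Phi(y,t,s),s)$ is absolutely continuous with the claimed a.e. derivative, and that the composed integrand is dominated uniformly in $y$, so that the differentiation and the order of integration may be exchanged. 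For a general, possibly non-finite, Borel $\bar\mu$ one restricts attention to the compact $x$-support of $\varphi$ and its bounded $\Phi^{-1}$-preimage, on which $\bar\mu$ contributes only finite mass, so all integrals involved are finite.
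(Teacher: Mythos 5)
Your argument is correct in substance, but note that the paper itself offers no proof at all: it simply cites \cite{AmbCri2014}, Proposition~4. What you have written is essentially the standard verification that the reference carries out --- pair the equation with a test function $\varphi\in C^\infty_c(\R^d\times[t,T[)$, push every integral back onto $\bar\mu$ via the identity $\int g\,d\mu(\cdot,s)=\int g(\Phi(y,t,s))\,d\bar\mu(y)$, recognize $\partial_s\varphi+\beta\cdot\nabla_x\varphi$ evaluated along a flow line as $\frac{d}{ds}\varphi(\Phi(y,t,s),s)$, and telescope with the fundamental theorem of calculus. Your handling of the analytic fine points is also sound: since every $\beta(\cdot,s)$ lies in $\tilde\cB$ for a.e.\ $s$, the velocity is in fact uniformly bounded in time (not merely $L^2$), so $s\mapsto\Phi(y,t,s)$ is Lipschitz and the chain rule and Fubini interchanges are justified exactly as you say.

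The one genuine gap is that you prove only half of the statement. The proposition asserts that the push-forward \emph{is the} distributional solution, i.e.\ it also claims uniqueness, and your verification only shows that $\Phi(\cdot,t,s)\sharp\bar\mu$ is \emph{a} solution. Under the standing hypotheses ($\beta(\cdot,s)\in W^{1,\infty}$ uniformly, bounded divergence) uniqueness of measure-valued distributional solutions is classical --- it follows from the Cauchy--Lipschitz well-posedness of \eqref{ode} together with the superposition/comparison results of DiPerna--Lions and Ambrosio, which is precisely what the cited Proposition~4 of \cite{AmbCri2014} packages --- but it is a separate statement that your argument does not touch and that should at least be invoked explicitly. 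A second, minor point: for a completely arbitrary Borel measure $\bar\mu$ the right-hand side of \eqref{weakform} need not be finite unless $\bar\mu$ is locally finite; your restriction to the compact support of $\varphi$ handles this only under that implicit local finiteness assumption, which is worth stating.
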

\begin{proof}
See \cite{AmbCri2014}, Proposition 4.
\end{proof}
In the case when $\bar\mu$ is absolutely continuous w.r.t. the Lebesgue measure $\cL^d$, that is $\bar\mu= \bar m\mathcal{L}^d$, where $\bar m:\R^d\lra\R$ is the density, all the measures $\mu(\cdot, s)$ are absolutely continuous w.r.t. $\mathcal{L}^d$ too, and their density $m(\cdot, s)$ can be explicitly computed as (see \cite{AmbCri2014})
\begin{equation}
\label{representationformula}
m(\cdot, s)=\frac{\bar m(\cdot)}{{|\rm{det}} J \Phi(\cdot, t, s)|} \circ \Phi^{-1}(\cdot, t, s),
\end{equation}
where $J\Phi$ is the Jacobian matrix of $\Phi(\cdot, t, s)$. Still referring to \cite{AmbCri2014}, and also using our hypothesis \eqref{vectorfields}, we have the following inequalities
\begin{equation}
\label{stimedet}
0<e^{-MT}\leq{\rm{det}}J\Phi(x, t, s)\leq e^{MT}\quad\text{for all }t\in[0, T],\ s \in [t,T] \ \text{and}\ x\in\R^d.
\end{equation}
Such estimates come from the fact that the function $s\longmapsto\det J\Phi(x, t, s)$ solves
$$
\begin{cases}
\frac{d}{ds}{\rm{det}}J\Phi(x, t, s)=({\rm{div}}\beta)(\Phi(x, t, s), s){\rm{det}}J\Phi(x, t, s),&s\in]t, T]\\
{\rm{det}}J\Phi(x, t, t)=1
\end{cases},
$$
where in the first line $(\div\beta)(\Phi(x, t, s), s)$ stands for the spatial divergence of $\beta$ calculated in the point $(\Phi(x, t, s), s)\in\R^d\times]t, T]$. In particular it is
$$
{\rm{det}}J\Phi(x, t, s)=e^{\left(\int_t^s(\operatorname{div}\beta)(\Phi(x, t, \tau), \tau)d\tau\right)}.
$$
In the following, by $m(x, s; \beta, t, \bar m)$ we denote the value at $(x, s)\in\R^d\times]t, T]$ of the density of the solution \eqref{solutioneqcont} of \eqref{conteq} with $\beta\in\cB$, where the initial datum at time $t$, $\bar\mu$, has density $\bar m$.
\par
From now on we assume
\begin{equation}
\label{ipotesim0}
\bar m\in H^1(\mathbb R^d)\cap W^{1, \infty}(\R^d).
\end{equation}
\subsection{Spatial and time estimates on the density $m$}
\label{spatialtime}
In this subsection we prove that, under hypotheses \eqref{ipotesim0} and \eqref{vectorfields},
\begin{align}
\label{stimespaziali}
&x\longmapsto m(x, s; \beta, t, \bar m)\text{ belongs to }H^1(\mathbb R^d)\cap W^{1, \infty}(\R^d),\\
\label{continuitam}
&m(\cdot, \cdot; \beta, t, \bar m)\in C^0([t, T], H^1(\R^d)\cap W^{1, \infty}(\R^d)),
\end{align}
for every $t\in[0, T]$ and $s\in[t, T]$, where in \eqref{continuitam} the modulus of continuity is independent of $t, \beta$ and only depends on the norms of $\bar m$. Moreover we prove that
\begin{equation}
\label{dipendenzacont1}
\|m(\cdot, s; \beta, t_1, \bar m^1)-m(\cdot, s; \beta, t_2, \bar m^2)\|_{H^1(\mathbb R^d)}\leq\tilde L\left(|t_1-t_2|+\|\bar m^1(\cdot)-\bar m^2(\cdot)\|_{H^1(\mathbb R^d)}\right),
\end{equation}
for every $t_1,t_2\in[0, T]$, $s\in[\max\{t_1,t_2\}, T]$ and for some $\tilde L$ independent of $t_1,t_2$ and $\beta$.
\par\smallskip
For \eqref{stimespaziali}, if we prove that $\det J\Phi(\cdot, t, s)\in W^{1, \infty}(\R^d)$, by the estimates \eqref{stimedet} and by \eqref{ipotesim0}, we then have that 
$$
\frac{\bar m(\cdot)}{\det J\Phi(\cdot, t, s)}\in H^1(\R^d)\cap W^{1, \infty}(\R^d).
$$
Hence, since $\Phi^{-1}(\cdot, t, s)$ is a $C^1$ invertible function, by a known result (see for example \cite{Brezis}), \eqref{representationformula} gives \eqref{stimespaziali}.
\par
So we prove that $\det J\Phi(\cdot, t, s)\in W^{1, \infty}(\R^d)$:
\begin{multline*}
\|\det J\Phi(\cdot, t, s)\|_{W^{1, \infty}(\R^d)}=\|\det J\Phi(\cdot, t, s)\|_{L^{\infty}(\R^d)}+\|\partial_x\det J\Phi(\cdot, t, s)\|_{L^{\infty}(\R^d)}
\\
=\esssup_{x\in\R^d}|\det J\Phi(x, t, s)|+\esssup_{x\in\R^d}|\partial_x\det J\Phi(x, t, s)|
\\
=\esssup_{x\in\R^d}\left|e^{\left(\int_t^s(\operatorname{div}\beta)(\Phi(x, t, \tau), \tau)d\tau\right)}\right|+\esssup_{x\in\R^d}\left|\partial_x\left(e^{\left(\int_t^s(\operatorname{div}\beta)(\Phi(x, t, \tau), \tau)d\tau\right)}\right)\right|
\\
=\esssup_{x\in\R^d}\left|e^{\left(\int_t^s(\operatorname{div}\beta)(\Phi(x, t, \tau), \tau)d\tau\right)}\right|
\\
+\esssup_{x\in\R^d}\left|e^{\left(\int_t^s(\div\beta)(\Phi(x, t, \tau), \tau)d\tau\right)}\int_t^s\partial_x(\div\beta)(\Phi(x, t, \tau), \tau)\partial_x(\Phi(x, t, \tau))d\tau\right|,
\end{multline*}
where, in general, here and sometimes in the sequel, by $\partial_x$ we denote both the Jacobian matrix $d\times d$ and, when the function is scalar, the gradient $1\times d$. By \eqref{stimedet} and Remark \ref{remark1}, we have that $e^{\left(\int_t^s(\div\beta)(\Phi(x. t, \tau), \tau)d\tau\right)}$, $\partial_x(\div\beta)(\Phi(x, t, \tau), \tau)$ and $\partial_x(\Phi(x, t, \tau))$ are bounded, and hence $\|\det J\Phi(\cdot, t, s)\|_{W^{1, \infty}(\R^d)}<+\infty$. Note also that the derivation w.r.t. $x$ of $(\div\beta)(\Phi(x, t, \tau), \tau)$ is possible because $(\div\beta)(\cdot, \tau)\in W^{1, \infty}(\R^d)$ and $\Phi(\cdot, t, \tau)$ is a $C^1$ invertible function, and the (weak) derivative can be computed with the usual differentiation rules (see again \cite{Brezis}).
\par\smallskip
Furthermore, after similar calculations one can show that for the density $m$ we have the following estimate
\begin{equation}
\label{stimamw}
\|m(\cdot, s; \beta, t, \bar m)\|_{W^{1, \infty}(\R^d)}\leq e^{MT}\sup\{1+\tilde M,L_{\Phi^{-1}}\}\|\bar m(\cdot)\|_{W^{1, \infty}(\R^d)},
\end{equation}
where $\tilde M>0$ is a constant enclosing the bounds for the spatial derivative of $\div\beta$, $\Phi$ and $\Phi^{-1}$, independent of $t, \beta$,
and $L_{\Phi^{-1}}$ is the Lipschitz constant of $\Phi^{-1}(\cdot, t, s)$. 
\par\smallskip
Now we prove \eqref{continuitam}. At first we show that, for every $t\in[0, T]$ and $s\geq t$, the function 
\begin{equation}
\label{percontinuita}
\psi:s\longmapsto\det J\Phi(\cdot, t, s)\ \text{ belongs to }\operatorname{Lip}([t, T], W^{1, \infty}(\R^d))
\end{equation}
with Lipschitz constant independent of $t, \beta$. In this way \eqref{representationformula} gives \eqref{continuitam} (see also Remark \ref{remark1}). Then for every $s_1,s_2\in[t, T]$ we have
\begin{multline*}
\|\psi(s_1)-\psi(s_2)\|_{W^{1, \infty}(\R^d)}=\left\|e^{\int_t^{s_1}(\div\beta)(\Phi(\cdot, t, \tau), \tau)d\tau}-e^{\int_t^{s_2}(\div\beta)\Phi(\cdot, t, \tau), \tau)d\tau}\right\|_{W^{1, \infty}(\R^d)}
\\
=\esssup_{x\in\R^d}\left|e^{\int_t^{s_1}(\div\beta)(\Phi(x, t, \tau), \tau)d\tau}-e^{\int_t^{s_2}(\div\beta)\Phi(x, t, \tau), \tau)d\tau}\right|
\\
+\esssup_{x\in\R^d}\left|\partial_x\left(e^{\int_t^{s_1}(\div\beta)(\Phi(x, t, \tau), \tau)d\tau}\right)-\partial_x\left(e^{\int_t^{s_2}(\div\beta)\Phi(x, t, \tau), \tau)d\tau}\right)\right|
\\
\leq L_1\esssup_{x\in\R^d}\left|\int_t^{s_1}(\div\beta)(\Phi(x, t, \tau), \tau)d\tau-\int_t^{s_2}(\div\beta)(\Phi(x, t, \tau), \tau)d\tau\right|
\\
+\esssup_{x\in\R^d}\left|e^{\int_t^{s_1}(\div\beta)(\Phi(x, t, \tau), \tau)d\tau}\int_t^{s_1}\partial_x(\div\beta)(\Phi(x, t, \tau), \tau)\partial_x(\Phi(x, t, \tau))d\tau\right.
\\
\left.-e^{\int_t^{s_2}(\div\beta)(\Phi(x, t, \tau), \tau)d\tau}\int_t^{s_2}\partial_x(\div\beta)(\Phi(x, t, \tau), \tau)\partial_x(\Phi(x, t, \tau))d\tau\right|
\\
\leq L_1\esssup_{x\in\R^d}\left|\int_{s_2}^{s_1}(\div\beta)(\Phi(x, t, \tau), \tau)d\tau\right|
\\
+\esssup_{x\in\R^d}\left|\left(\int_t^{s_1}\partial_x(\div\beta)(\Phi(x, t, \tau), \tau)\partial_x(\Phi(x, t, \tau))d\tau\right)\left(e^{\int_t^{s_1}(\div\beta)(\Phi(x, t, \tau), \tau)d\tau}\right.\right.
\\
\left.\left.-e^{\int_t^{s_2}(\div\beta)(\Phi(x, t, \tau), \tau)d\tau}\right)\right.
\\
\left.+\left(e^{\int_t^{s_2}(\div\beta)(\Phi(x, t, \tau), \tau)d\tau}\right)\left(\int_{s_2}^{s_1}\partial_x(\div\beta)(\Phi(x, t, \tau), \tau)\partial_x(\Phi(x, t, \tau))d\tau\right)\right|
\\
\leq(L_1M+M_1TL_1M+e^{MT}M_1)|s_1-s_2|,
\end{multline*}
where $L_1>0$ is the Lipschitz constant of $e^{(\cdot)}$ and $M_1>0$ is the bound for the term $\partial_x(\div\beta)(\Phi(x, t, \tau), \tau)\partial_x(\Phi(x, t, \tau))$. Hence \eqref{percontinuita} holds with a Lipschitz constant independent of $t, \beta$. 
For proving \eqref{dipendenzacont1}, we observe that, similarly as the computations for \eqref{stimamw}, we have
\begin{equation}
\label{stimamwH1}
\|m(\cdot, s; \beta, t, \bar m)\|^2_{H^1(\R^d)}\leq3e^{2MT}\sup\{1+\tilde M^2, L^2_{\Phi^{-1}}\}\|\bar m(\cdot)\|^2_{H^1(\R^d)}
\end{equation}
and then, by the linearity of the continuity equation for $m$, \eqref{stimamwH1} gives the desired inequality.
\begin{remark}
\label{osscontinuita}
By \eqref{stimespaziali} and \eqref{campispaziali}, $\div(\beta m)\in L^2(\R^d\times[0, T])$. Given the Lipschitz continuity of $s\longmapsto\Phi^{-1}(x, t, s)$ (see Remark \ref{remark1}), similar computations  as above give $\frac{\partial m}{\partial t}\in L^2(\R^d\times[0, T])$. Moreover, since $\mu$, given by \eqref{solutioneqcont}, is the solution of \eqref{conteq} in the sense of measures, we have that its density $m$ is solution, in the sense of distributions, of $\frac{\partial m}{\partial t}+\div(\beta m)=0$. Hence, since $\frac{\partial m}{\partial t}, \div(\beta m)\in L^2(\R^d\times[0, T])$, we have that $m$ is the unique solution in $L^2(\R^d\times[0, T])$ (see \cite{CrippaTesi}) and, in particular, $\frac{\partial m}{\partial t}=-\div(\beta m)$ as functions in $L^2(\R^d\times[0, T])$. From this, it follows that: if $\phi\in C^1(L^2(\R^d))$, then the map
\begin{align*}
\xi&:[t, T]\lra\R\\
\tau&\longmapsto\xi(\tau)=\phi(m(\cdot, \tau; \beta, t, \bar m))
\end{align*}
is differentiable and $\xi'(\tau)=-\langle D_m\phi, \div(\beta m)\rangle_{L^2(\R^d)}$, where $D_m\phi$ denotes the Fr\'echet derivative of $\phi$ w.r.t. $m$ and $\langle\cdot, \cdot\rangle_{L^2(\R^d)}$ the scalar product in $L^2(\R^d)$. 
\end{remark}
\section{The differential game model: first results}
\label{dynprogreg}
In this section, we introduce the model of the differential game between a single player and a mass of agents.
\par
The controlled dynamics of the single player is given by 
\begin{equation}
\label{ode1}
\begin{cases}
y'(s)=f(y(s), \a(s)),&s\in]t, T]\\
y(t)=x\in\mathbb R^d
\end{cases},
\end{equation}
where $T>0$ is given, $t\in[0, T]$, $f:\R^d\times A\lra\R^d$ ($A\subset\R^m$ compact) is continuous, bounded and Lipschitz continuous w.r.t. $x\in\R^d$ uniformly w.r.t. $a\in A$, i.e. there exists $L>0$ such that
$$
\|f(x, a)-f(z, a)\|\leq L\|x-z\|\quad\text{for every}\ x, z\in\R^d\ \text{and}\ a\in A.
$$
The pair $(x, t)$ is the initial datum and the control is
$$
\a\in\cA(t):=\{\a:[t, T]\lra A:\a\text{ is measurable}\}.
$$
Given the control and the initial datum, we denote by $y_{(x, t)}(\cdot; \a)$ the unique solution of \eqref{ode1} in $[t, T]$, and, if no ambiguity arises, we do not display the dependence on the control $\a$ using the notation $y_{(x, t)}(\cdot)$.
\par
Coherently with Remark \ref{osscontinuita}, the controlled equation for the evolution of the mass is
\begin{equation}
\label{contequationformass}
\begin{cases}
m_s(\cdot, s)+\div(\beta(\cdot, s)m(\cdot, s))=0,&s\in]t, T]\\
m(\cdot, t)=\bar m
\end{cases},
\end{equation}
where $t\in[0, T]$ and, using the same notation as in the previous section, $\bar m\in H^1(\R^d)\cap W^{1, \infty}(\R^d)$ and the control $\beta$ belongs to
$$
\cB(t):=\{\beta\in L^2([t, T], W^{2, \infty}(\R^d, \R^d)\cap H^1(\R^d, \R^d)):\beta(\cdot, s)\in\tilde\cB\ \text{ for a.e. }s\}.
$$
We consider a running cost
\begin{align*}
\ell:\R^d\times H^1(\R^d)\times[0, T]\times \R^m\times L^2(\R^d)&\lra[0, +\infty[\\
(x, m, s, a, b)&\longmapsto\ell(x, m, s, a, b),
\end{align*}
(recall that $A\subset\R^m$ and $\tilde\cB\subset L^2(\R^d)$) bounded, strongly continuous and uniformly strongly continuous w.r.t. $(x, m, t)$ uniformly w.r.t. $(a, b)$, that is there exists a modulus of continuity $\omega_{\ell}$ such that, for any fixed $a, b$ it is
$$
|\ell(x_1, m_1, s_1, a, b)-\ell(x_1, m_2, s_2, a, b)|\leq\omega_{\ell}\left(\|x_1-x_2\|+\|m_1-m_2\|_{H^1(\R^d)}+|s_1-s_2|\right)
$$
for all $(x_1, m_1, s_1), (x_2, m_2, s_2)\in\R^d\times H^1(\R^d)\times[0, T]$.
\par
Similarly we consider a final cost
\begin{align*}
\psi:\R^d\times H^1(\R^d)&\lra[0, +\infty[\\
(x, m)&\longmapsto\psi(x, m),
\end{align*}
bounded and uniformly strongly continuous w.r.t. $(x, m)$.
\par
The corresponding cost functional $J$, for $(x, \bar m, t, \a, \beta)\in\R^d\times H^1(\R^d)\times[0, T]\times\cA(t)\times\cB(t)$, is given by
\begin{multline*}
J(x, \bar m, t, \a, \beta)\\
=\int_t^T\ell(y_{(x, t)}(s), m(\cdot, s; \beta, t, \bar m), s, \a(s), \beta(\cdot, s))ds+\psi(y_{(x, t)}(T), m(\cdot, T; \beta, t, \bar m)).
\end{multline*}
The single player wants to minimize the cost $J$ and the mass $m$ wants to maximize it. As argued in the Introduction, here we consider only the $\inf\sup$ best-worst case for the single player. Hence we introduce the non-anticipating strategies for the single player
$$
\Gamma(t)=\{\gamma:\cB(t)\lra\cA(t): \beta\longmapsto\gamma[\beta]\ \text{non-anticipating}\},
$$
where ``non-anticipating'' means, for all $\tau\in[t, T]$,
$$
\beta_1(\cdot, s)=\beta_2(\cdot, s)\ \text{in $\tilde\cB$ a.e. $s$}\in [t, \tau]\ \Rightarrow \gamma[\beta_1](s)=\gamma[\beta_2](s)\ \text{a.e.}\ s\in[t, \tau].
$$
We then consider the lower value function
\begin{equation}
\label{lowervaluefun}
\underbar{V}(x, \bar m, t)=\inf_{\gamma\in\Gamma(t)}\sup_{\beta\in\cB(t)}J(x, \bar m, t, \gamma[\beta], \beta).
\end{equation}
\subsection{Dynamic Programming Principle and regularity of the value function}
\begin{proposition}
\label{dynprog}
Under the previous hypotheses and \eqref{vectorfields}, the lower value function $\underbar V$ \eqref{lowervaluefun} satisfies the Dynamic Programming Principle: for all $(x, \bar m, t)\in\mathbb R^d\times H^1(\mathbb R^d)\times[0, T]$ and for all $\tau\in[t, T]$, 
\begin{multline}
\label{dpp}
\underbar V(x, \bar m, t)=\inf_{\gamma\in\Gamma(t)}\sup_{\beta\in\cB(t)}\Bigg(\int_t^{\tau}\ell(y_{(x, t)}(s), m(\cdot, s; \beta, t, \bar m), s, \gamma[\beta](s), \beta(\cdot, s))ds\\+\underbar V(y_{(x, t)}(\tau), m(\cdot, \tau; \beta, t, \bar m), \tau)\Bigg),
\end{multline}
where $s\longmapsto y_{(x, t)}(s):=y_{(x, t)}(s; \a)$ is the corresponding solution to \eqref{ode1} with control $\a=\gamma[\beta]$. 
\end{proposition}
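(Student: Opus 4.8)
The plan is to prove the two inequalities $\underbar V(x,\bar m,t)\le W(x,\bar m,t)$ and $\underbar V(x,\bar m,t)\ge W(x,\bar m,t)$, where $W$ denotes the right-hand side of \eqref{dpp}. Both rest on two structural facts. The first is the \emph{cocycle (semigroup) property} of the two dynamics: for $s\ge\tau$ one has $y_{(x,t)}(s;\a)=y_{(y_{(x,t)}(\tau),\tau)}(s;\a|_{[\tau,T]})$ by uniqueness of solutions of \eqref{ode1}, while $m(\cdot,s;\beta,t,\bar m)=m(\cdot,s;\beta,\tau,m(\cdot,\tau;\beta,t,\bar m))$ follows from the flow identity $\Phi(\cdot,t,s)=\Phi(\cdot,\tau,s)\circ\Phi(\cdot,t,\tau)$ together with the push-forward representation \eqref{solutioneqcont}--\eqref{representationformula}. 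Consequently the cost splits at $\tau$ as
$$
J(x,\bar m,t,\a,\beta)=\int_t^\tau\ell\,ds+J\big(y_{(x,t)}(\tau),m(\cdot,\tau;\beta,t,\bar m),\tau,\a|_{[\tau,T]},\beta|_{[\tau,T]}\big).
$$
The second fact is that restriction and concatenation of admissible controls stay admissible: if $\beta_1\in\cB(t)$ and $\beta_2\in\cB(\tau)$, their concatenation at $\tau$ lies in $\cB(t)$, because membership in $\cB$ is the pointwise-in-time condition $\beta(\cdot,s)\in\tilde\cB$ together with $L^2$-integrability in time, both preserved under gluing.

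For the inequality $\underbar V\ge W$, I would fix an arbitrary $\gamma\in\Gamma(t)$; since the functional defining $W$ depends on $\gamma$ only through its restriction to $[t,\tau]$, it suffices to bound $\sup_\beta J$ from below by that functional. Fix $\varepsilon>0$. For a control $\beta_r\in\cB(t)$ used on $[t,\tau]$, the map $\beta''\longmapsto\gamma[\beta_r\oplus\beta'']|_{[\tau,T]}$ is a well-defined element of $\Gamma(\tau)$: the non-anticipating property of $\gamma$ guarantees that its output on $[\tau,T]$ depends on $\beta_r$ only through the past, so it is non-anticipating as a strategy starting at $\tau$. Hence, by the very definition of $\underbar V$ at the intermediate state,
$$
\sup_{\beta''}J\big(y_{(x,t)}(\tau),m(\cdot,\tau;\beta_r,t,\bar m),\tau,\gamma[\beta_r\oplus\beta'']|_{[\tau,T]},\beta''\big)\ge\underbar V\big(y_{(x,t)}(\tau),m(\cdot,\tau;\beta_r,t,\bar m),\tau\big).
$$
Choosing $\beta''$ within $\varepsilon$ of this supremum, and using that $\int_t^\tau\ell$ depends only on $(\gamma[\beta_r],\beta_r)$ by non-anticipation, the cost splitting yields $\sup_\beta J(x,\bar m,t,\gamma[\beta],\beta)\ge\int_t^\tau\ell+\underbar V(\cdots)-\varepsilon$ for every $\beta_r$; taking the supremum over $\beta_r$ and then the infimum over $\gamma$ gives $\underbar V\ge W-\varepsilon$, and $\varepsilon\downarrow0$ closes this direction.

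For the reverse inequality $\underbar V\le W$, I would build a single strategy by gluing. Given $\varepsilon>0$, pick $\gamma_1\in\Gamma(t)$ that is $\varepsilon$-optimal for the infimum in $W$, and for every reachable intermediate state $(z,n)=(y_{(x,t)}(\tau),m(\cdot,\tau;\beta,t,\bar m))$ at time $\tau$ pick $\gamma^2_{z,n}\in\Gamma(\tau)$ with $\sup_{\beta''}J(z,n,\tau,\gamma^2_{z,n}[\beta''],\beta'')\le\underbar V(z,n,\tau)+\varepsilon$. I would then define $\gamma[\beta]$ to coincide with $\gamma_1[\beta]$ on $[t,\tau]$ and with $\gamma^2_{z,n}[\beta|_{[\tau,T]}]$ on $(\tau,T]$, where the index $(z,n)$ is determined by $\beta|_{[t,\tau]}$. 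The cost splitting then gives $J(x,\bar m,t,\gamma[\beta],\beta)\le\int_t^\tau\ell+\underbar V(z,n,\tau)+\varepsilon\le W+2\varepsilon$ for all $\beta$, whence $\underbar V\le W+2\varepsilon$ and the claim follows.

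I expect the \emph{main obstacle} to be verifying that the glued map $\gamma$ in the second inequality is genuinely a single non-anticipating strategy in $\Gamma(t)$: on $(\tau,T]$ its value depends both on the future of $\beta$ (through $\gamma^2_{z,n}$, itself non-anticipating) and on the past of $\beta$ (only through the index $(z,n)$, which is determined by that past), and one must check that this composite dependence respects the causality condition defining $\Gamma(t)$ for \emph{every} splitting time, not merely for $\tau$. A secondary technical point is the consistency of the assignment $(z,n)\longmapsto\gamma^2_{z,n}$: since we work with deterministic strategies, the selection at each intermediate state may be made independently, but the resulting estimate must be uniform in $\beta$, which is exactly where the $\varepsilon$-uniform near-optimality of the family $\gamma^2_{z,n}$ is used.
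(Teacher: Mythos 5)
Your proposal is correct and follows essentially the same route as the paper: both directions rest on splitting the cost at $\tau$ via the cocycle property, deriving a strategy in $\Gamma(\tau)$ from one in $\Gamma(t)$ by freezing the past control (for $\underbar V\ge W$), and gluing a near-optimal strategy for the right-hand side with state-indexed $\varepsilon$-optimal strategies at time $\tau$ (for $\underbar V\le W$). The non-anticipation of the glued strategy, which you flag as the main obstacle, is asserted without detailed verification in the paper as well, and your sketch of why it holds is the right one.
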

\begin{proof}
For all $(\xi, u, \tau)\in\mathbb R^d\times H^1(\mathbb R^d)\times[0, T]$ and for all $\varepsilon>0$ we take $\gamma_{(\xi, u, \tau)}\in\Gamma(\tau)$ such that
\begin{equation}
\label{primoeottimo}
\underbar V(\xi, u, \tau)\geq \sup_{\beta\in \cB(\tau)}J(\xi, u, \tau, \gamma_{(\xi, u, \tau)}[\beta], \beta)-\varepsilon.
\end{equation}
Denote by $w(x, \bar m, t)$ the right-hand side of \eqref{dpp}. Then we have to prove that
\begin{itemize}
\item[$(i)$] $\underbar V(x, \bar m, t)\leq w(x, \bar m, t)$;
\item[$(ii)$] $\underbar V(x, \bar m, t)\geq w(x, \bar m, t)$.
\end{itemize}
For $(i)$, take $\bar\gamma\in\Gamma(t)$ such that
\begin{multline*}
w(x, \bar m, t)\geq\sup_{\beta\in\cB(t)}\Bigg(\int_t^{\tau}\ell(y_{(x, t)}(s), m(\cdot, s; \beta, t, \bar m), s, \bar\gamma[\beta](s), \beta(\cdot, s))ds\\+\underbar V(y_{(x, t)}(\tau), m(\cdot, \tau; \beta, t, \bar m), \tau)\Bigg)-\varepsilon.
\end{multline*}
Taken $\beta\in\cB(t)$ and still denoting by $\beta$ its restriction to $[\tau, T]$, we have $\beta\in\cB(\tau)$. We then define $\tilde\gamma\in\Gamma(t)$ as 
$$
\tilde\gamma[\beta](s)=\begin{cases}\bar\gamma[\beta](s),&s\in[t, \tau]\\ \gamma_{(y_{(x, t)}(\tau), m(\cdot, \tau; \beta, t, \bar m), \tau)}[\beta](s),&s\in[\tau, T]\end{cases},
$$
where $y_{(x, t)}(\tau)$ is the position reached at time $\tau$ by the solution of \eqref{ode1} with control $\a=\bar\gamma[\beta]$. The strategy $\tilde\gamma$ is well-defined and non-anticipating, i.e., it belongs to $\Gamma(t)$. We then have
\begin{multline*}
w(x, \bar m, t)\geq\sup_{\beta\in\cB(t)}\Bigg(\int_t^{\tau}\ell(y_{(x, t)}(s), m(\cdot, s; \beta, t, \bar m), s, \bar\gamma[\beta](s), \beta(\cdot, s))ds\\+\underbar V(y_{(x, t)}(\tau), m(\cdot, \tau; \beta, t, \bar m), \tau)\Bigg)-\varepsilon\\
\geq\sup_{\beta\in\cB(t)}\Bigg(\int_t^{\tau}\ell(y_{(x, t)}(s), m(\cdot, s; \beta, t, \bar m), s, \bar\gamma[\beta](s), \beta(\cdot, s))ds\\+J(y_{(x, t)}(\tau), m(\cdot, \tau; \beta, t, \bar m), \tau, \gamma_{(y_{(x, t)}(\tau), m(\cdot, \tau; \beta, t, \bar m), \tau)}[\beta], \beta)\Bigg)-2\varepsilon\\
=\sup_{\beta\in\cB(t)}J(x, \bar m, t, \tilde\gamma[\beta], \beta)-2\varepsilon\geq\underbar V(x, \bar m, t)-2\varepsilon,
\end{multline*}
and we conclude. 
\par
For $(ii)$, take $\varepsilon>0$ and $\beta_1\in\cB(t)$ such that
\begin{multline*}
w(x, \bar m, t)\leq\int_t^{\tau}\ell(y_{(x, t)}(s), m(\cdot, s; \beta_1, t, \bar m), s, \gamma_{(x, \bar m, t)}[\beta_1](s), \beta_1(\cdot, s))ds\\+\underbar V(y_{(x, t)}(s), m(\cdot, \tau; \beta_1, t, \bar m), \tau)+\varepsilon,
\end{multline*}
where $y_{(x, t)}(\cdot):=y_{(x, t)}(\cdot; \a)$ is the solution of \eqref{ode1} with control $\a=\gamma_{(x, \bar m, t)}[\beta_1]$. We define $\tilde\gamma\in\Gamma(\tau)$ as
$$
\tilde\gamma[\beta](s)=\gamma_{(x, \bar m, t)}[\tilde \beta](s),
$$
where $\tilde\beta\in\cB(t)$ is defined as
$$
\tilde \beta(\cdot, s)=\begin{cases}
\beta_1(\cdot, s),&s\in[t, \tau],\\
\beta(\cdot, s),&\text{otherwise}
\end{cases}.
$$
By the non-anticipating feature of $\gamma_{(x, \bar m, t)}$ and by construction, also $\tilde\gamma$ is non-anticipating, i.e., $\tilde\gamma\in\Gamma(\tau)$. Now, take $\beta_2\in\cB(\tau)$ such that
$$
\underbar V(y_{(x, t)}(\tau), m(\cdot, \tau; \beta_1, t, \bar m), \tau)\leq J(y_{(x, t)}(\tau), m(\cdot, \tau; \beta_1, t, \bar m), \tau, \tilde\gamma[\beta_2], \beta_2)+\varepsilon,
$$
and we get (also recall \eqref{primoeottimo})
\begin{multline*}
\underbar V(x, \bar m, t)\geq J(x, \bar m, t, \gamma_{(x, \bar m, t)}[\tilde\beta], \tilde\beta)-\varepsilon\\
=\int_t^{\tau}\ell(y_{(x, t)}(s), m(\cdot, s; \beta_1, t, \bar m), s, \gamma_{(x, \bar m, t)}[\beta_1](s), \beta_1(\cdot, s))ds\\+J(y_{(x, t)}(\tau), m(\cdot, \tau; \beta_1, t, \bar m), \tau, \tilde\gamma[\beta_2], \beta_2)-\varepsilon\\
\geq\int_t^{\tau}\ell(y_{(x, t)}(s), m(\cdot, s; \beta_1, t, \bar m), s, \gamma_{(x, \bar m, t)}[\beta_1](s), \beta_1(\cdot, s))ds\\+\underbar V(y_{(x, t)}(\tau), m(\cdot, \tau; \beta_1, t, \bar m), \tau)-2\varepsilon\geq w(x, \bar m, t)-3\varepsilon,
\end{multline*}
and we conclude.
\end{proof}
We now address the regularity properties of $\underbar V$. Here, and in the following, we denote by $\omega_{\psi}$ the modulus of continuity of $\psi$ (being $\omega_{\ell}$ the one of $\ell$, as above). We also set $C:=\max\{\|f(z, a)\|:(z, a)\in\R^d\times A\}$.
\begin{proposition}
Under the hypotheses of Proposition \ref{dynprog}, the lower value function $\underbar V$ is bounded and uniformly continuous. 
\end{proposition}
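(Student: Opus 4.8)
The plan is to prove boundedness directly and to establish uniform continuity in two stages: first with respect to the state variables $(x,\bar m)$ at a fixed initial time, and then with respect to $t$ by means of the Dynamic Programming Principle \eqref{dpp}. Boundedness is immediate: since $\ell$ and $\psi$ take values in $[0,+\infty[$ and are bounded, every admissible pair $(\a,\beta)$ gives $0\le J(x,\bar m,t,\a,\beta)\le\|\ell\|_{\infty}T+\|\psi\|_{\infty}$, and this estimate is preserved by the $\inf_{\gamma}\sup_{\beta}$ in \eqref{lowervaluefun}, so that $0\le\underbar V\le\|\ell\|_{\infty}T+\|\psi\|_{\infty}$.

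For continuity in $(x,\bar m)$ at a fixed $t$, I fix a strategy $\gamma\in\Gamma(t)$ and a control $\beta\in\cB(t)$ and compare the cost issued from $(x_1,\bar m^1)$ with the one issued from $(x_2,\bar m^2)$. Since $\gamma[\beta]$ is the \emph{same} measurable control for both data, the two trajectories solve \eqref{ode1} with identical control and different initial points; as $f$ is $L$-Lipschitz in $x$, Gronwall's inequality yields $\|y_{(x_1,t)}(s)-y_{(x_2,t)}(s)\|\le e^{LT}\|x_1-x_2\|$ on $[t,T]$. For the densities, estimate \eqref{dipendenzacont1} applied with $t_1=t_2=t$ and the same $\beta$ gives $\|m(\cdot,s;\beta,t,\bar m^1)-m(\cdot,s;\beta,t,\bar m^2)\|_{H^1(\R^d)}\le\tilde L\|\bar m^1-\bar m^2\|_{H^1(\R^d)}$. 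Feeding these bounds into the moduli $\omega_{\ell}$ and $\omega_{\psi}$ and integrating, I obtain, with $\rho:=e^{LT}\|x_1-x_2\|+\tilde L\|\bar m^1-\bar m^2\|_{H^1(\R^d)}$,
$$|J(x_1,\bar m^1,t,\gamma[\beta],\beta)-J(x_2,\bar m^2,t,\gamma[\beta],\beta)|\le T\,\omega_{\ell}(\rho)+\omega_{\psi}(\rho),$$
a bound uniform in $\gamma,\beta$. Passing this uniform estimate through $\sup_{\beta}$ and then through $\inf_{\gamma}$, and exchanging the roles of the two data, I conclude
$$|\underbar V(x_1,\bar m^1,t)-\underbar V(x_2,\bar m^2,t)|\le T\,\omega_{\ell}(\rho)+\omega_{\psi}(\rho),$$
which is a modulus of continuity in $(x,\bar m)$ that is uniform in $t$.

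For continuity in $t$, take $t_1<t_2$ and the same $(x,\bar m)$, and apply \eqref{dpp} with $\tau=t_2$, so that $\underbar V(x,\bar m,t_1)=\inf_{\gamma}\sup_{\beta}\bigl(\int_{t_1}^{t_2}\ell\,ds+\underbar V(y_{(x,t_1)}(t_2),m(\cdot,t_2;\beta,t_1,\bar m),t_2)\bigr)$. The running-cost integral is controlled by $\|\ell\|_{\infty}|t_2-t_1|$. The state displacement satisfies $\|y_{(x,t_1)}(t_2)-x\|\le C|t_2-t_1|$ because $\|f\|\le C$, while the density displacement obeys $\|m(\cdot,t_2;\beta,t_1,\bar m)-\bar m\|_{H^1(\R^d)}\le\omega_m(|t_2-t_1|)$ by the time-continuity \eqref{continuitam}, whose modulus $\omega_m$ is independent of $\beta$ and $t_1$. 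Using the $(x,\bar m)$-modulus from the previous step, each term inside the $\inf\sup$ differs from $\underbar V(x,\bar m,t_2)$ by at most the modulus evaluated at $C|t_2-t_1|+\omega_m(|t_2-t_1|)$, uniformly in $\gamma,\beta$; together with $0\le\int_{t_1}^{t_2}\ell\,ds\le\|\ell\|_{\infty}|t_2-t_1|$ and $\ell\ge0$, both the upper and the lower comparison with $\underbar V(x,\bar m,t_2)$ follow, yielding a modulus of continuity in $t$. A triangle inequality then combines the two moduli into uniform continuity of $\underbar V$ on $\R^d\times H^1(\R^d)\times[0,T]$.

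The main obstacle is precisely the time-continuity step: it is the only place where the admissible sets $\Gamma(t)$ and $\cB(t)$ genuinely depend on the base point, so one cannot reuse a single fixed pair $(\gamma,\beta)$ as in the state estimate. The Dynamic Programming Principle is what bridges this gap, reducing a time increment to a state increment evaluated at the common later time $t_2$; the argument then rests crucially on the uniformity, in $\beta$ and $t_1$, of both the density's time-modulus from \eqref{continuitam} and the $(x,\bar m)$-modulus established above.
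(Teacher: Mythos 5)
Your proof is correct, but it takes a genuinely different route from the paper's for the uniform-continuity part (the boundedness argument is the same). The paper never invokes the Dynamic Programming Principle here: it treats the increments in $x$, $\bar m$ and $t$ simultaneously, by choosing a near-optimal strategy $\gamma_2$ for the datum $(x_2,m_2,t_2)$, splicing it into a strategy $\gamma_2^1\in\Gamma(t_1)$ (padding with a fixed control $\bar a$ or freezing $\beta$ on $[t_2,t_1]$, according to the sign of $t_1-t_2$), then choosing a near-optimal $\beta_1\in\cB(t_1)$ and a companion $\beta_1^2\in\cB(t_2)$, and finally estimating $|J(x_1,m_1,t_1,\gamma_2^1[\beta_1],\beta_1)-J(x_2,m_2,t_2,\gamma_2[\beta_1^2],\beta_1^2)|$ term by term, using Gronwall, \eqref{dipendenzacont1} and the boundedness of $\ell$ on the mismatched time interval. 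You instead decouple the two difficulties: at a fixed initial time the admissible sets $\Gamma(t)$, $\cB(t)$ coincide for both data, so a single common pair $(\gamma,\beta)$ plus the elementary inequality $|\inf\sup F-\inf\sup G|\le\sup|F-G|$ yields the $(x,\bar m)$-modulus; the time increment is then absorbed via \eqref{dpp} with $\tau=t_2$, reducing it to a state displacement of size $C|t_2-t_1|+\omega_m(|t_2-t_1|)$ controlled by \eqref{continuitam}. This is legitimate (the DPP is proved beforehand and without using continuity of $\underbar V$, so there is no circularity) and arguably cleaner, since it avoids the control-splicing bookkeeping; what it costs is the dependence on the DPP and on the uniformity in $\beta$ and $t$ of the modulus in \eqref{continuitam}, which you correctly flag as the crucial ingredient. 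Both arguments deliver the same kind of explicit modulus, built from $\omega_\ell$, $\omega_\psi$, $e^{LT}$, $\tilde L$ and $G_1|t_1-t_2|$.
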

\begin{proof}
For the boundedness, for all $(x, \bar m, t)$ we get
$$
\underbar V(x, \bar m, t)\leq G_1T+G_2,
$$
where $G_1>0$ and $G_2>0$ are the bounds for $\ell$ and $\psi$ respectively. 
\par
We then consider $(x_1, m_1, t_1), (x_2, m_2, t_2)\in \mathbb R^d\times H^1(\mathbb R^d)\times[0, T]$, and take $\varepsilon>0$. Let $\gamma_2\in\Gamma(t_2)$ be such that
$$
\underbar V(x_2, m_2, t_2)\geq\sup_{\beta\in\cB(t_2)}J(x_2, m_2, t_2, \gamma_2[\beta], \beta)-\varepsilon.
$$
We define $\gamma_2^1\in\Gamma(t_1)$ as
$$
\gamma_2^1[\beta](s)=\begin{cases}\gamma_2[\tilde \beta](s)&\text{for any }s\ \text{if }t_2\leq t_1\\
\bar a&\text{for }t_1\leq s\leq t_2\ \text{if }t_1\leq t_2\\
\gamma_2[\beta_{|[t_2, T]}](s)&\text{for }s\geq t_2\ \text{if }t_1\leq t_2
\end{cases},
$$
where $\bar a\in\mathbb R^d$ is any a priori fixed vector and, if $t_2\leq t_1$,
$$
\tilde \beta(\cdot, s)=\begin{cases}
\beta(\cdot, t_1)&\text{if }t_2\leq s\leq t_1\\
\beta(\cdot, s)&\text{if }s\geq t_1
\end{cases}.
$$
We then take $\beta_1\in\cB(t_1)$ such that
$$
J(x_1, m_1, t_1, \gamma_2^1[\beta_1], \beta_1)\geq\sup_{\beta\in\cB(t_1)}J(x_1, m_1, t_1, \gamma_2^1[\beta], \beta)-\varepsilon.
$$
Finally, we define $\beta_1^2\in\cB(t_2)$ such that
$$
\beta_1^2(\cdot, s)=\beta_1(\cdot, s)\ \text{for }s\geq t_2+|t_1-t_2|.
$$
Hence we have
\begin{multline*}
\underbar V(x_1, m_1, t_1)-\underbar V(x_2, m_2, t_2)\\
\leq\sup_{\beta\in\cB(t_1)}J(x_1, m_1, t_1, \gamma_2^1[\beta], \beta)-\sup_{\beta\in\cB(t_2)}J(x_2, m_2, t_2, \gamma_2[\beta], \beta)+\varepsilon\\
\leq J(x_1, m_1, t_1, \gamma_2^1[\beta_1], \beta_1)-J(x_2, m_2, t_2, \gamma_2[\beta_1^2], \beta_1^2)+2\varepsilon.
\end{multline*}
Now, if 
\begin{equation}
\label{stimacosti}
|J(x_1, m_1, t_1, \gamma_2^1[\beta_1], \beta_1)-J(x_2, m_2, t_2, \gamma_2[\beta_1^2], \beta_1^2)|
\end{equation}
is infinitesimal as $|t_1-t_2|+\|x_1-x_2\|+\|m_1(\cdot)-m_2(\cdot)\|_{H^1(\mathbb R^d)}$ goes to zero, then we conclude. 
\par
Assume $t_1\leq t_2$. Then \eqref{stimacosti} is equal to
\begin{multline*}
\Bigg|\int_{t_1}^T\ell(y_{(x_1, t_1)}(s), m(\cdot, s; \beta, t_1, m_1), s, \gamma_2^1[\beta_1], \beta_1(\cdot, s))ds+\psi(y_{(x_1, t_1)}(T), m(\cdot, T; \beta, t_1, m_1))\\
-\int_{t_2}^T\ell(y_{(x_2, t_2)}(s), m(\cdot, s; \beta, t_2, m_2), s, \gamma_2[\beta_1^2](s), \beta_1^2(\cdot, s))ds\\
-\psi(y_{(x_1, t_1)}(T), m(\cdot, T; \beta, t_2, m_2))\Bigg|\\
=\Bigg|\int_{t_1}^{t_2}\ell(y_{(x_1, t_1)}(s), m(\cdot, s; \beta, t_1, m_1), s, \bar a, \beta_1(\cdot, s))ds\\
+\int_{t_2}^T\ell(y_{(x_1, t_1)}(s), m(\cdot, s; \beta, t_1, m_1), s, \gamma_2[\beta_{|[t_2, T]}](s), \beta_1^2(\cdot, s))ds\\
-\int_{t_2}^T\ell(y_{(x_2, t_2)}(s), m(\cdot, s; \beta, t_2, m_2), s, \gamma_2[\beta_{|[t_2, T]}](s), \beta_1^2(\cdot, s))ds\\
+\psi(y_{(x_1, t_1)}(T), m(\cdot, T; \beta, t_1, m_1))-\psi(y_{(x_2, t_2)}(T), m(\cdot, T; \beta, t_2, m_2))\Bigg|\\
\leq G_1|t_1-t_2|+|\psi(y_{(x_1, t_1)}(T), m(\cdot, T; \beta, t_1, m_1))-\psi(y_{(x_2, t_2)}(T), m(\cdot, T; \beta, t_2, m_2))|\\
+\int_{t_2}^T\left|\ell(y_{(x_1, t_1)}(s), m(\cdot, s; \beta, t_1, m_1), s, \gamma_2[\beta_{|[t_2, T]}](s), \beta_1^2(\cdot, s))\right.\\
\left.-\ell(y_{(x_2, t_2)}(s), m(\cdot, s; \beta, t_2, m_2), s, \gamma_2[\beta_{|[t_2, T]}](s), \beta_1^2(\cdot, s))\right|ds\\
\leq G_1|t_1-t_2|\\
+\omega_{\psi}\left(e^{LT}(\|x_1-x_2\|+C|t_1-t_2|)+\|m(\cdot, T; \beta, t_1, m_1)-m(\cdot, T; \beta, t_2, m_2)\|_{H^1(\mathbb R^d)}\right)\\
+T\omega_{\ell}\left(e^{LT}(\|x_1-x_2\|+C|t_1-t_2|)+\|m(\cdot, s; \beta, t_1, m_1)-m(\cdot, s; \beta, t_2, m_2)\|_{H^1(\mathbb R^d)}\right)\\
\leq G_1|t_1-t_2|+\omega_{\psi}\left(e^{LT}(\|x_1-x_2\|+C|t_1-t_2|)+\tilde L\|m_1(\cdot)-m_2(\cdot)\|_{H^1(\R^d)}\right)\\
+T\omega_{\ell}\left(e^{LT}(\|x_1-x_2\|+C|t_1-t_2|)+\tilde L\|m_1(\cdot)-m_2(\cdot)\|_{H^1(\R^d)}\right),
\end{multline*}
where the last inequality holds due to \eqref{dipendenzacont1} in \S\ref{spatialtime}. Therefore, as $|t_1-t_2|+\|z_1-z_2\|+\|m_1(\cdot)-m_2(\cdot)\|_{H^1(\mathbb R^d)}$ goes to zero, \eqref{stimacosti} is infinitesimal. 
\par
Now we assume $t_2\leq t_1$. Then
\begin{multline*}
\Bigg|\int_{t_1}^T\ell(y_{(x_1, t_1)}(s), m(\cdot, s; \beta, t_1, m_1), s, \gamma_2^1[\beta_1], \beta_1(\cdot, s))ds+\psi(y_{(x_1, t_1)}(T), m(\cdot, T; \beta, t_1, m_1))\\
-\int_{t_2}^T\ell(y_{(x_2, t_2)}(s), m(\cdot, s; \beta, t_2, m_2), s, \gamma_2[\beta_1^2](s), \beta_1^2(\cdot, s))ds\\
-\psi(y_{(x_2, t_2)}(T), m(\cdot, T; \beta, t_2, m_2))\Bigg|\\
=\Bigg|\int_{t_1}^T\ell(y_{(x_1, t_1)}(s), m(\cdot, s; \beta, t_1, m_1), s, \gamma_2[\beta_1](s), \beta_1(\cdot, s))ds\\
+\psi(y_{(x_1, t_1)}(T), m(\cdot, T; \beta, t_1, m_1))\\
-\int_{t_2}^{t_1}\ell(y_{(x_2, t_2)}(s), m(\cdot, s; \beta, t_2, m_2), s, \gamma_2[\beta_1^2](s), \beta_1^2(\cdot, s))ds\\
-\int_{t_1}^T\ell(y_{(x_2, t_2)}(s), m(\cdot, s; \beta, t_2, m_2), s, \gamma_2[\beta_1](s), \beta_1(\cdot, s))ds-\psi(y_{(x_2, t_2)}(T), m(\cdot, T; \beta, t_2, m_2))\Bigg|\\
\leq G_1|t_1-t_2|+|\psi(y_{(x_1, t_1)}(T), m(\cdot, T; \beta, t_1, m_1))-\psi(y_{(x_2, t_2)}(T), m(\cdot, T; \beta, t_2, m_2))|\\
+\int_{t_1}^T\left|\ell(y_{(x_1, t_1)}(s), m(\cdot, s; \beta, t_1, m_1), s, \gamma_2[\beta_1](s), \beta_1(\cdot, s))\right.\\
\left.-\ell(y_{(x_2, t_2)}(s), m(\cdot, s; \beta, t_2, m_2), s, \gamma_2[\beta_1](s), \beta_1(\cdot, s))\right|ds\\
\leq G_1|t_1-t_2|\\
+\omega_{\psi}\left(e^{LT}(\|x_1-x_2\|+C|t_1-t_2|)+\|m(\cdot, T; \beta, t_1, m_1)-m(\cdot, T; \beta, t_2, m_2)\|_{H^1(\mathbb R^d)}\right)\\
+T\omega_{\ell}\left(e^{LT}(\|x_1-x_2\|+C|t_1-t_2|)+\|m(\cdot, s; \beta, t_1, m_1)-m(\cdot, s; \beta, t_2, m_2)\|_{H^1(\mathbb R^d)}\right)\\
\leq G_1|t_1-t_2|+\omega_{\psi}\left(e^{LT}(\|x_1-x_2\|+C|t_1-t_2|)+\tilde L\|m_1(\cdot)-m_2(\cdot)\|_{H^1(\R^d)}\right)\\
+T\omega_{\ell}\left(e^{LT}(\|x_1-x_2\|+C|t_1-t_2|)+\tilde L\|m_1(\cdot)-m_2(\cdot)\|_{H^1(\R^d)}\right),
\end{multline*}
where in the last inequality we have used again \eqref{dipendenzacont1}. Letting $|t_1-t_2|+\|z_1-z_2\|+\|m_1(\cdot)-m_2(\cdot)\|_{H^1(\mathbb R^d)}$ go to zero, we conclude.
\end{proof}
\section{The Hamilton-Jacobi-Isaacs equation for $\underbar V$}
\label{HJIforV}
The value function $\underbar V$ is a function of $(x, m, t)\in\R^d\times H^1(\R^d)\times[0, T]$. Here we want to derive the corresponding Hamilton-Jacobi-Isaacs equation and consider it in a suitable set $\tilde X$ for the variables $(x, m, t)$, which is compact in $\R^d\times L^2(\R^d)\times[0, T]$ and invariant for the controlled evolutions $y_{(x, t)}(\cdot; \a)$, $m(\cdot, s; \beta, t, \bar m)$. In order to determine such a possible set $\tilde X$, we first fix an open bounded subset $\Omega\subset\R^d$ which is going to contain the supports of all the admissible initial distribution $m_0$, at time $t=0$, for the mass. Moreover, we fix a constant $K>0$ such that all the admissible initial distributions of the mass belong to
$$
{\cal M}=\left\{m_0\in W^{1,\infty}(\Omega):\|m_0\|_{W^{1,\infty}(\Omega)}\leq K,\ \supp m_0\subset\Omega\right\}.
$$
Referring to \eqref{stimamw}, we set $B:=\max\{1+\tilde M,L_{\Phi^{-1}}\}K$ and for all $m_0\in{\cal M}$, $\beta\in\cB(0)$ and $s\in[0,T]$ we get
\begin{equation}
\label{stimanuovanuova}
\|m(\cdot,s;\beta, 0, m_0)\|_{W^{1,\infty}(\Omega)}\le Be^{MT}.
\end{equation}
Now, denoting by $M$ a bound for $\|\beta(\cdot,t)\|_\infty$, as in \eqref{campispaziali}, i.e. the maximal possible velocity of the mass' agents, we define, for all $t\in[0,T]$ and for all $s\in[t,T]$, the bounded sets
$$
\Omega_1(t)=\overline B(\Omega, Mt),\ \ \Omega_1(s,t)=\overline B\left(\Omega_1(t),M(s-t)\right),
$$
being, as example, $\overline{B}(\Omega, Mt)=\{x\in\R^d:\text{dist}(x, \Omega)\leq Mt\}$. Note that $\Omega_1(t)$ represents the maximal set that can be invaded, up to the time $t$, by agents starting from $\overline\Omega$ at the time $t_0=0$, whereas $\Omega_1(s,t)$ represents the maximal set that can be invaded, at the time $s$, by agents starting from $\Omega_1(t)$ at the time $t$. Of course, we have a sort of semigroup property: $\Omega_1(0)=\overline\Omega$, $\Omega_1(s,t)=\Omega_1(s)$ for all $t\in[0,s]$ and, in particular, $\Omega_1(T)=\Omega_1(T,t)$ is the maximal set that can be invaded at the final time $T$ by a mass having support in $\Omega_1(t)$ at the time $t$. That is, assuming $\bar m\in W^{1, \infty}(\R^d)\cap H^1(\R^d)$ with $\supp\bar m\subset\Omega_1(t):=\overline{B}(\Omega, r(t))$, we have, independently of $\beta\in\cB(t)$,
$$
\supp m(\cdot, s'; \beta, \bar m)\subset\Omega_1(s',t)\subset\Omega_1(s'',t)\subset\Omega_1(T,t)=\Omega_1(T)\label{stimanuova0}\\
$$
for all $t\le s'\le s''\le T$. Note that all such functions $m$ can be assumed as defined in the whole set $\Omega_1(T)$ putting them equal to $0$ outside their support. Now, for every $t\in[0, T]$, let $K(t)\subset H^1(\Omega_1(T))$ be a compact set such that $K(t')\subset K(t)$ for every $0\leq t'\leq t\leq T$, $m\in K(t)$ if $K(t_n)\ni m_n\lra m$ in $H^1(\Omega_1(T))$ and $t_n\to t$ in $[0, T]$, and for every $0\leq t'\leq t\leq T$, $K(t)$ contains $m(\cdot, t; \beta, t', \bar m)$ for every $\beta\in\cB$ and $\bar m\in K(t')$. Observe that, by \eqref{dipendenzacont1} and suitable possible convergences on the set of controls $\beta$, a simple example of such compact sets $K(t)$ may be the points of all trajectories up to the time $t$ started at time $0$ from points of a compact set $K$ in $H^1(\Omega_1(T))$. Recalling \eqref{stimanuovanuova}, then for all $t\in[0,T]$, we define the following set:
\begin{multline*}
X(t):=\left\{\bar m\in W^{1, \infty}(\Omega_1(T))\cap K(t):\right.\\
\left.\supp\bar m\subset\Omega_1(t),\ \|m(\cdot, s; \beta, t, \bar m)\|_{W^{1, \infty}(\Omega_1(T))}\leq Be^{MT} \ \text{for all }s\in[t,T]\text{ and }\beta\in\cB(t)\right\}.
\end{multline*}
In this way, the domain of the Hamiltonian we are going to consider (see \eqref{hamiltonian} below) concerning the pair $(m, t)$ is defined as the following set:
\begin{multline*}
X=\bigcup_{t\in[0, T]}(X(t)\times\{t\})
=\left\{(\bar m, t)\in W^{1, \infty}(\Omega_1(T))\cap K(t)\times[0, T]:\right.\\
\left.\supp\bar m\subset\Omega_1(t),\ \|m(\cdot, s; \beta, t, \bar m)\|_{W^{1, \infty}(\Omega_1(T))}\leq Be^{MT}\ \text{for all }s\in[t,T] \text{ and }\beta\in\cB(t)\right\}.
\end{multline*}
By construction, $X$ is compact in $H^1(\Omega_1(T))\times[0, T]$ and hence also in $L^2(\Omega_1(T))\times[0, T]$. In particular, let us directly prove that it is closed in $H^1\times[0, T]$. Let $(m_n,t_n)\in X$ be converging in $H^1\times[0,T]$ to $(\bar m,\bar t)$. We want to show that $(\bar m,\bar t)\in X$. Since $(m_n,t_n)\in X$, the sequence $m_n$ is bounded in $W^{1,\infty}$ and hence the convergence to $m$ is also uniform, from which we obtain $\supp\bar m\subset\Omega_1(\bar t)$. Now, take $s>\bar t$ and $n$ sufficiently large such that $t_n<s$. By \eqref{continuitam} and \eqref{dipendenzacont1} (the modulus of continuity of \eqref{continuitam} is independent of $m_n\in X$), we have the convergence in $H^1$ of $m(\cdot,s; \beta, t_n, m_n)$ to $m(\cdot,s;\beta, \bar t, \bar m)$. The boundedness of the former in $W^{1,\infty}$ implies the weak-star convergence, which implies
$$
\|m(\cdot,s;\beta,\bar t, \bar m)\|_{W^{1,\infty}}\le\liminf\|m(\cdot,s;\beta, t_n, m_n)\|_{W^{1,\infty}}\le Be^{MT}.
$$
\par
Due to compactness, the $H^1$ and $L^2$ norms are topologically equivalent in $X$, that is there exists a modulus of continuity $\tilde\omega$ such that for every $(m_1,t_1),( m_2,t_2)\in X$, 
\begin{equation}
\label{modcontm}
\|m_1-m_2\|_{H^1(\Omega_1(T))}\leq\tilde\omega\left(\|m_1-m_2\|_{L^2(\Omega_1(T))}\right).
\end{equation}
Moreover, $X$ contains all the possible trajectories $s\longmapsto(m(\cdot,s;\beta,t,\bar m),s)=(m(s),s)$ (with field $\beta\in\cB(t)$) starting from any $(\bar m,t)\in X$. Indeed, for all $s\in[t,T]$ and for $\tau\in[s,T]$, it is
$$
\|m(\cdot,\tau;\beta,s,m(s))\|_{W^{1, \infty}(\Omega_1(T))}=\|m(\cdot,\tau;\beta,t,\bar m)\|_{W^{1, \infty}(\Omega_1(T))}\le Be^{MT}
$$
and hence $(m(s),s)\in X$ because, by definition, $\supp m(s)\subset\Omega_1(s)$ since $\supp\bar m\subset\Omega_1(t)$. Hence $X$ is viable under the evolution of the continuity equation with fields $\beta\in\cB(t)$. In particular, it contains all the trajectories $t\longmapsto(m(t),t)$ starting from $(m_0,0)$ with $m_0\in\cM\cap K(0)$.
\par
A similar construction is naturally done for the trajectories of the single player of \eqref{ode1}. More precisely, we define $\Omega_2(t):=\overline{B}(\Omega, Ct)$, where $C$ is the bound for $\|f(\cdot, a)\|$, and hence we have that $y_{(x, t)}(\tau)\in\Omega_2(s,t):=\overline{B}(\Omega_2(t), C(s-t))$ for every $\tau\in[t, s]$. We define then the set
\begin{multline*}
\tilde X:=\bigcup_{t\in[0, T]}(\Omega_2(t)\times X(t)\times\{t\})=\\
\left\{(x, m, t)\in\Omega_2(T)\times W^{1, \infty}(\Omega_1(T))\cap K(t)\times[0, T]:x\in\Omega_2(t),\ (m,t)\in X\right\}.
\end{multline*}
Hence $\tilde X$ is a compact set in $\R^d\times H^1\times[0,T]$, and is viable under all the admissible trajectories with controls $\alpha\in\cA(t)$ and $\beta\in\cB(t)$. That is, all our admissible trajectories in the time interval $[0,T]$, when starting from points of $\tilde X$, can not exit from $\tilde X$ itself. From the point of view of the Hamilton-Jacobi-Isaacs equation that we are going to state and study on $\tilde X$, this means that, besides the time boundary at $t=0$ and $t=T$, there will not be other spatial boundaries requiring boundary conditions. Indeed, from any point of $\tilde X$, each trajectory still remains inside $\tilde X$ itself, then the Dynamic Programming Principle always holds and so definitely the Hamilton-Jacobi-Isaacs equation does, without spatial boundary conditions (see the proof of Theorem \ref{esistenza}).

We then define the Hamiltonian $H:\tilde X\times\R^d\times L^2(\R^d)\lra\R$ as
\begin{equation}
\label{hamiltonian}
H(x, m, t, p, q):=\min_{b\in\tilde\cB}\max_{a\in A}\left\{-f(x, a)\cdot p+\langle q, \operatorname{div}(bm)\rangle_{L^2(\R^d)}-\ell(x, m, t, a, b)\right\},
\end{equation}
where $\langle \cdot, \cdot\rangle_{L^2(\R^d)}$ denotes the scalar product in $L^2(\R^d)$. In the sequel, we denote by $D_x$ the gradient $1\times d$, and we are going to consider the fields $\beta(\cdot, s)$ defined on the compact set $\Omega_1(T)$, i.e., $\beta:\Omega_1(T)\times[t, T]\lra\R^d$.
\begin{definition}
\label{viscositysolutiondef}
A function $u\in C^0(\tilde X)$ is a viscosity subsolution of 
\begin{equation}
\label{hjbviscosity}
-u_t(x, m, t)+H(x, m, t, D_xu, D_mu)=0\quad\text{in }\tilde X
\end{equation}
if, for any $\phi\in C^1(\R^d\times L^2(\R^d)\times[0, T])$,
$$
-\phi_t(\bar x, \bar m, \bar t)+H(\bar x, \bar m, \bar t, D_x\phi(\bar x, \bar m, \bar t), D_m\phi(\bar x, \bar m, \bar t))\leq0
$$
at any local maximum point $(\bar x, \bar m, \bar t)\in\tilde X$ of $u-\phi$. Similarly, $u\in C^0(\tilde X)$ is a viscosity supersolution of \eqref{hjbviscosity} if, for any $\phi\in C^1(\R^d\times L^2(\R^d)\times[0, T])$,
$$
-\phi_t(\tilde x, \tilde m, \tilde t)+H(\tilde x, \tilde m, \tilde t, D_x\phi(\tilde x, \tilde m, \tilde t), D_m\phi(\tilde x, \tilde m, \tilde t))\geq0
$$
at any local minimum point $(\tilde x, \tilde m, \tilde t)\in\tilde X$ of $u-\phi$. Finally, $u$ is a viscosity solution of \eqref{hjbviscosity} if it is simultaneously a viscosity sub- and supersolution. 
\end{definition}
We point out that the above local maximum/minimum point is w.r.t. $\tilde X$ and not necessarily w.r.t. $\R^d\times L^2(\R^d)\times[0, T]$, where the test function is defined. This is not an issue because, as argued before, $\tilde X$ is viable for our trajectories and moreover in $\tilde X$ the convergences of $m$ in $H^1(\R^d)$ and in $L^2(\R^d)$ are equivalent.
\begin{theorem}
\label{esistenza}
Under the hypotheses of \S\ref{dynprogreg}, the lower value function $\underbar V$ is a viscosity solution (see Definition \ref{viscositysolutiondef}) of
\begin{equation}
\label{HJI}
\begin{cases}
-V_t+H(x, m, t, D_xV, D_mV)=0&\text{in }\tilde X\\
V(x, m, T)=\psi(x, m),&(x, m)\in\Omega_2(T)\times X(T)\times\{T\}
\end{cases}.
\end{equation}
\end{theorem}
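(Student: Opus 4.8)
The plan is to verify the terminal condition by inspection and then to establish the subsolution and the supersolution inequalities separately, in each case inserting the Dynamic Programming Principle \eqref{dpp} into a first-order expansion of the test function along the controlled trajectories. Since $\underbar V\in C^0(\tilde X)$ I may work directly in the framework of Definition \ref{viscositysolutiondef}. The terminal condition is immediate: for $t=T$ the running integral in $J$ is empty while $y_{(x,T)}(T)=x$ and $m(\cdot,T;\beta,T,\bar m)=\bar m$, so $J(x,\bar m,T,\a,\beta)=\psi(x,\bar m)$ for all $\a,\beta$ and hence $\underbar V(x,\bar m,T)=\psi(x,\bar m)$. For the interior equation I fix $\phi\in C^1(\R^d\times L^2(\R^d)\times[0,T])$ and a local extremum point $(\bar x,\bar m,\bar t)$ of $\underbar V-\phi$ in $\tilde X$ with $\bar t<T$. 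The one tool I use repeatedly is the chain rule along a trajectory: writing $y(s)=y_{(\bar x,\bar t)}(s;\a)$ and $m(s)=m(\cdot,s;\beta,\bar t,\bar m)$, by \eqref{ode1} and by Remark \ref{osscontinuita} applied to the partial map $m\mapsto\phi(y(s),m,s)$ the scalar function $s\mapsto\phi(y(s),m(s),s)$ is differentiable and
\[
\frac{d}{ds}\phi(y(s),m(s),s)=\phi_t+D_x\phi\cdot f(y(s),\a(s))-\langle D_m\phi,\div(\beta(\cdot,s)m(s))\rangle_{L^2(\R^d)},
\]
all derivatives of $\phi$ being evaluated at $(y(s),m(s),s)$. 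Denoting $Z_h=(y(\bar t+h),m(\bar t+h),\bar t+h)$, integration turns $\phi(Z_h)-\phi(\bar x,\bar m,\bar t)$ into the time integral of the bracket appearing in the Hamiltonian \eqref{hamiltonian}; viability of $\tilde X$ keeps $Z_h\in\tilde X$, so the local-extremum inequality is available at $Z_h$ for $h$ small.

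For the supersolution inequality I assume $(\bar x,\bar m,\bar t)$ is a local minimum of $\underbar V-\phi$ and fix an arbitrary constant field $\bar b\in\tilde\cB$. Given $\varepsilon>0$, I choose by \eqref{dpp} a strategy $\gamma_\varepsilon\in\Gamma(\bar t)$ that is $\varepsilon h$-optimal for the infimum, and test it against the constant control $\beta\equiv\bar b$ on $[\bar t,\bar t+h]$; this gives $\int_{\bar t}^{\bar t+h}\ell\,ds+\underbar V(Z_h)\le\underbar V(\bar x,\bar m,\bar t)+\varepsilon h$. Using the local minimum to bound $\underbar V(Z_h)$ from below by $\phi(Z_h)-\phi(\bar x,\bar m,\bar t)+\underbar V(\bar x,\bar m,\bar t)$, substituting the chain rule, dividing by $h$ and letting $h\to0^+$ and then $\varepsilon\to0^+$, I obtain
\[
\max_{a\in A}\big\{-f(\bar x,a)\cdot D_x\phi+\langle D_m\phi,\div(\bar b\,\bar m)\rangle_{L^2(\R^d)}-\ell(\bar x,\bar m,\bar t,a,\bar b)\big\}\ge\phi_t(\bar x,\bar m,\bar t).
\]
Since $\bar b\in\tilde\cB$ was arbitrary, taking the minimum over $\bar b$ yields $H(\bar x,\bar m,\bar t,D_x\phi,D_m\phi)\ge\phi_t$, that is $-\phi_t+H\ge0$.

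For the subsolution inequality I let $(\bar x,\bar m,\bar t)$ be a local maximum of $\underbar V-\phi$ and argue by contradiction, assuming $-\phi_t+H(\bar x,\bar m,\bar t,D_x\phi,D_m\phi)=2\theta>0$. By continuity of $f$ and $\ell$, of $b\mapsto\langle D_m\phi,\div(bm)\rangle_{L^2(\R^d)}$, and by compactness of $A$ and of $\tilde\cB$ in $L^2$ (Remark \ref{remark1}), there is a neighbourhood $N\subset\tilde X$ of $(\bar x,\bar m,\bar t)$ on which, for every $b\in\tilde\cB$,
\[
\max_{a\in A}\big\{-f(x,a)\cdot D_x\phi+\langle D_m\phi,\div(bm)\rangle_{L^2(\R^d)}-\ell(x,m,s,a,b)\big\}-\phi_t(x,m,s)\ge\theta;
\]
since all admissible trajectories have bounded speed, $Z_h\in N$ for $h$ small. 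I then take $\gamma^*\in\Gamma(\bar t)$ to be the greedy strategy of the single player, namely $\gamma^*[\beta](s)$ a measurable selection of the $\argmax$ over $a\in A$ of the above bracket evaluated at $(y(s),m(s),s)$ with $b=\beta(\cdot,s)$; it is memoryless, hence non-anticipating. For this $\gamma^*$ the bracket always attains its maximum, so the chain rule gives, for every $\beta\in\cB(\bar t)$, $\int_{\bar t}^{\bar t+h}\ell\,ds+\phi(Z_h)-\phi(\bar x,\bar m,\bar t)\le-\theta h$; combining with the local maximum inequality $\phi(Z_h)-\phi(\bar x,\bar m,\bar t)\ge\underbar V(Z_h)-\underbar V(\bar x,\bar m,\bar t)$ produces $\int_{\bar t}^{\bar t+h}\ell\,ds+\underbar V(Z_h)\le\underbar V(\bar x,\bar m,\bar t)-\theta h$. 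Taking the supremum over $\beta$ then contradicts the bound $\sup_{\beta}\{\int\ell+\underbar V(Z_h)\}\ge\underbar V(\bar x,\bar m,\bar t)$ which is part of \eqref{dpp}. Hence $-\phi_t+H\le0$, and together with the supersolution property and the terminal condition this shows that $\underbar V$ solves \eqref{HJI}.

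I expect the main obstacle to be the subsolution step, specifically the rigorous construction of the greedy strategy $\gamma^*$: one must produce, measurably and non-anticipatingly in $\beta$, a player control realising the inner $\max_{a}$ along the very trajectory it helps to generate, which is an implicit closed-loop object and calls for a measurable maximum theorem together with a Filippov-type existence argument for the coupled system (or, alternatively, a piecewise-constant-in-time approximation passed to the limit). Everything else is routine once the chain rule and the viability of $\tilde X$ are in hand; two minor points are that the local extrema are taken relative to $\tilde X$, which is legitimate precisely because $\tilde X$ is viable and, by \eqref{modcontm}, carries the same $H^1$ and $L^2$ topology, and that attainment of the outer $\min_{b}$ in $H$ rests on the $L^2$-compactness of $\tilde\cB$ from Remark \ref{remark1}.
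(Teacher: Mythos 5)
Your verification of the terminal condition and your supersolution argument are sound and essentially equivalent to the paper's (the paper runs the supersolution step by contradiction, extracting a near-optimal $b^*$ from the $\min_b$ in $H$, while you work directly with an arbitrary fixed $\bar b\in\tilde\cB$ tested against an $\varepsilon h$-optimal strategy; the two are interchangeable). The gap is exactly where you predicted it, in the subsolution step, and it is not a technicality that can be deferred: the ``greedy'' strategy $\gamma^*[\beta](s)\in\argmax_{a\in A}\Lambda(y(s),m(s),s,a,\beta(\cdot,s))$ is circular, since $y(s)$ is the trajectory generated by $\gamma^*[\beta]$ itself. A Filippov-type argument for the resulting closed-loop inclusion would at best produce \emph{some} trajectory for each $\beta$, not a well-defined, single-valued, measurable map $\beta\longmapsto\gamma^*[\beta]\in\cA(\bar t)$, which is what the Dynamic Programming Principle \eqref{dpp} requires ($\gamma^*$ must be an actual element of $\Gamma(\bar t)$); non-uniqueness of Filippov solutions and the discontinuity of the argmax in $y$ both obstruct this, and your fallback suggestions do not address how the selection is to be made consistently across different $\beta$'s.

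The paper's Lemma \ref{lemmaxstrategia} resolves this differently, and the difference is the key idea missing from your proposal. One \emph{freezes the state} at $(\bar x,\bar m,\bar t)$: for each $b\in\tilde\cB$ one picks $a(b)$ with $\Lambda(\bar x,\bar m,\bar t,a(b),b)\geq\theta$; by uniform continuity of $\Lambda(\bar x,\bar m,\bar t,a,\cdot)$ for the $L^2$ topology and the $L^2$-compactness of $\overline{\tilde\cB}$ (Remark \ref{remark1}), finitely many balls $B(b_i,r_i)$ cover $\overline{\tilde\cB}$ with $\Lambda(\bar x,\bar m,\bar t,a_i,\cdot)\geq 3\theta/4$ on each, and the map $\Psi(b)=a_k$ on $B(b_k,r_k)\setminus\bigcup_{i<k}B(b_i,r_i)$ is constant on the pieces of a finite measurable partition. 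Setting $\gamma^*[\beta](s):=\Psi(\beta(\cdot,s))$ then gives a measurable, manifestly non-anticipating strategy with no fixed point to solve. Only afterwards does one use the continuity of $\Lambda$ along the trajectories (together with \eqref{continuitam} and the standard ODE estimates) to transfer the inequality from the frozen point to $(y(s),m(s),s)$ for $s$ close to $\bar t$, which is legitimate precisely because the lower bound $3\theta/4$ holds uniformly over $b\in\tilde\cB$. Without this construction (or an equivalent measurable selection performed at the frozen point rather than along the trajectory), your subsolution step does not close.
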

For the proof, we need the following
\begin{lemma}
\label{lemmaxstrategia}
Assume the hypotheses of Theorem \ref{esistenza}. Let $(x, m, t)\in\tilde X$ and $\phi\in C^1(\R^d\times L^2(\R^d)\times[0, T])$ be such that
\begin{equation}
\label{primaipotesilemma}
-\phi_t(x, m, t)+H(x, m, t, D_x\phi(x, m, t), D_m\phi(x, m, t))=\theta>0.
\end{equation}
Then there exists $\gamma^*\in\Gamma(t)$ such that for all $\beta\in\cB(t)$ and $\tau>t$ sufficiently close to $t$,
\begin{multline}
\label{tesilemma}
\int_t^{\tau}\Big\{\ell(y_{(x, t)}, m(\cdot, s; \beta, t, \bar m), s, \gamma^*[\beta], \beta(\cdot, s))+f(y_{(x, t)}, \gamma^*[\beta])\cdot D_x\phi(y_{(x, t)}, m(\cdot, s; \beta, t, \bar m), s)\\-\langle D_m\phi(y_{(x, t)}, m(\cdot, s; \beta, t, \bar m), s), \operatorname{div}(\beta(\cdot, s)m(\cdot, s; \beta, t, \bar m))\rangle_{L^2(\Omega_1(T))}\\
+\phi_t(y_{(x, t)}, m(\cdot, s; \beta, t, \bar m), s)\Big\}ds\leq-\frac{\theta(\tau-t)}{4},
\end{multline}
where $y_{(x, t)}=y_{(x, t)}(\cdot, \gamma^*[\beta])$. 
\end{lemma}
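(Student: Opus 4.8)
The plan is to freeze the coefficients at the base point $(x,\bar m,t)$, read off from the $\min_b\max_a$ structure an instantaneous optimal response for the single player, promote it to a non-anticipating strategy, and then transfer the resulting pointwise inequality from the base point to the running point by a continuity estimate that is uniform in the adversary's control $\beta$. Throughout, write $y(s)=y_{(x,t)}(s;\gamma^*[\beta])$ and $m_\beta(s)=m(\cdot,s;\beta,t,\bar m)$.

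First I would rewrite hypothesis \eqref{primaipotesilemma}. Since the term $\langle D_m\phi,\div(bm)\rangle$ does not depend on $a$, the Hamiltonian \eqref{hamiltonian} satisfies $H=-\max_{b\in\tilde\cB}\min_{a\in A}\Lambda(a,b)$, where, at the frozen point,
$$
\Lambda(a,b):=\ell(x,\bar m,t,a,b)+f(x,a)\cdot D_x\phi(x,\bar m,t)-\langle D_m\phi(x,\bar m,t),\div(b\bar m)\rangle_{L^2(\Omega_1(T))}.
$$
Thus \eqref{primaipotesilemma} reads $\max_{b}\min_{a}\Lambda(a,b)=-\theta-\phi_t(x,\bar m,t)$; in particular $\min_{a\in A}\Lambda(a,b)\le-\theta-\phi_t(x,\bar m,t)$ for every $b\in\tilde\cB$. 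As $A$ is compact and $\Lambda(\cdot,b)$ continuous, the minimum is attained, and since the argmin over $a$ depends on $b$ only through $\ell(x,\bar m,t,\cdot,b)$ (the $\div$ term being $a$-independent), a measurable selection theorem yields a measurable $a^*:\tilde\cB\to A$ with $\Lambda(a^*(b),b)=\min_a\Lambda(a,b)$. I then set $\gamma^*[\beta](s):=a^*(\beta(\cdot,s))$. Because $a^*$ is applied pointwise in $s$, if $\beta_1=\beta_2$ a.e.\ on $[t,\tau]$ then $\gamma^*[\beta_1]=\gamma^*[\beta_2]$ a.e.\ on $[t,\tau]$, so $\gamma^*\in\Gamma(t)$.

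With this choice, for every $\beta$ and every $s$ the \emph{frozen} integrand satisfies
$$
Q(s):=\Lambda(\gamma^*[\beta](s),\beta(\cdot,s))+\phi_t(x,\bar m,t)=\min_{a\in A}\Lambda(a,\beta(\cdot,s))+\phi_t(x,\bar m,t)\le-\theta .
$$
Denoting by $\Psi(s)$ the integrand of \eqref{tesilemma} (the same expression evaluated at the running point $(y(s),m_\beta(s),s)$), I would bound $\Psi(s)-Q(s)$ by a modulus $\eta(s-t)$ independent of $\beta$, splitting it into the variations of $\ell$, of $f\cdot D_x\phi$, of $\phi_t$, and of the inner-product term. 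The first is controlled by $\omega_\ell(\|y(s)-x\|+\|m_\beta(s)-\bar m\|_{H^1}+|s-t|)$; since $\|y(s)-x\|\le C(s-t)$ and, by \eqref{continuitam}, $\|m_\beta(s)-\bar m\|_{H^1}\to0$ as $s\to t$ with a modulus independent of $\beta$, this is $\beta$-uniformly small. The $f\cdot D_x\phi$ and $\phi_t$ variations are handled by the boundedness and Lipschitz continuity of $f$ and by the uniform continuity of $D_x\phi,\phi_t$ along the trajectory, which lies in the compact set $\tilde X$, using again that $m_\beta(s)\to\bar m$ in $L^2$ uniformly in $\beta$.

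The delicate term is $\langle D_m\phi,\div(\beta m_\beta(s))\rangle$, which I would split as
$$
\langle D_m\phi(y(s),m_\beta(s),s)-D_m\phi(x,\bar m,t),\div(\beta m_\beta(s))\rangle+\langle D_m\phi(x,\bar m,t),\div(\beta(m_\beta(s)-\bar m))\rangle .
$$
For the first piece, $\|\div(\beta(\cdot,s)m_\beta(s))\|_{L^2}\le M\|m_\beta(s)\|_{H^1}$ is bounded uniformly in $\beta,s$ by the bounds on $\tilde\cB$ in \eqref{campispaziali} and the $H^1$-bound on $m$, while $D_m\phi$ is uniformly continuous on $\tilde X$ with $\beta$-uniformly convergent argument; for the second piece, $\|\div(\beta(\cdot,s)(m_\beta(s)-\bar m))\|_{L^2}\le M\|m_\beta(s)-\bar m\|_{H^1}$ is $\beta$-uniformly infinitesimal by \eqref{continuitam}. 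This is exactly where the $\beta$-independent $H^1$-modulus of \eqref{continuitam} together with the $\tilde\cB$-bounds are essential, and controlling this infinite-dimensional term uniformly in the adversary's field is the main obstacle of the proof. Collecting the estimates gives $\Psi(s)\le-\theta+\eta(s-t)$ with $\eta$ independent of $\beta$; choosing $\tau>t$ so close to $t$ that $\eta(s-t)\le\theta/2$ on $[t,\tau]$ (possible uniformly in $\beta$, since $\eta$ is $\beta$-independent) yields $\int_t^\tau\Psi(s)\,ds\le-\tfrac{\theta}{2}(\tau-t)\le-\tfrac{\theta}{4}(\tau-t)$, which is \eqref{tesilemma}.
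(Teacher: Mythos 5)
Your proposal is correct and follows the same architecture as the paper's proof: freeze the Hamiltonian at the base point, extract from the $\min_b\max_a$ structure a response map $\tilde\cB\to A$, promote it to a non-anticipating strategy by applying it pointwise in $s$, and transfer the strict inequality to the running point by a $\beta$-uniform continuity estimate before integrating over $[t,\tau]$. The one genuine difference is in how the response map is built. The paper avoids measurable selection theorems altogether: it uses the $L^2$-compactness of $\overline{\tilde\cB}$ (Remark \ref{remark1}) to extract a finite cover by balls $B(b_i,r_i)$ on each of which a single fixed $a_i=a(b_i)$ keeps the frozen expression above $\tfrac{3\theta}{4}$, and defines $\Psi(b)=a_k$ by a first-hit rule on this cover, so that measurability of $s\longmapsto\Psi(\beta(\cdot,s))$ is immediate ($\Psi$ is Borel with finitely many values). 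You instead take an exact measurable argmin selection $a^*$; this is legitimate (the argmin correspondence is closed-valued and measurable into the compact set $A$, so a Kuratowski--Ryll-Nardzewski type theorem applies), but you should make explicit that the selection is \emph{Borel} measurable, since otherwise the composition $s\longmapsto a^*(\beta(\cdot,s))$ need not be measurable, and that the continuity of $b\longmapsto\langle D_m\phi,\div(b\bar m)\rangle_{L^2}$ in the $L^2$ topology on $b$ --- which both arguments rely on --- really uses the uniform $H^1$ and $W^{2,\infty}$ bounds built into $\tilde\cB$ in \eqref{campispaziali}. The paper's finite-cover device buys a more self-contained argument at the price of only a near-optimal response ($\tfrac{3\theta}{4}$ instead of $\theta$), which is harmless because a further loss to $\tfrac{\theta}{2}$ is absorbed in the continuity transfer in either version.
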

\begin{proof}
We define
\begin{multline*}
\Lambda(z, m, t, a, b):=-\phi_t(z, m, t)-f(z, a)\cdot D_x\phi(z, m, t)\\+\langle D_m\phi(z, m, t), \operatorname{div}(bm)\rangle_{L^2(\Omega_1(T))}-\ell(z, m, t, a, b).
\end{multline*}
By definition of $H$ and \eqref{primaipotesilemma}, we have
$$
\min_{b\in\tilde\cB}\max_{a\in A} \Lambda(x, m, t, a, b)=\theta.
$$
Then, for any $b\in\tilde\cB$ there is $a=a(b)\in A$ such that
$$
\Lambda(x, m, t, a, b)\geq\theta. 
$$
Observe that $\Lambda(x, m, t, a, \cdot)$ is uniformly continuous in $\overline{\tilde\cB}$ w.r.t. the $L^2$ topology, where we recall that $\overline{\tilde\cB}$ is the closure of $\tilde\cB$ (see Remark \ref{remark1}). This gives that
$$
\Lambda(z, m, t, a, \zeta)\geq\frac{3\theta}{4}\ \ \text{for all}\ \zeta\in B(b, r)\cap\overline{\tilde\cB}
$$
for some $r=r(b)>0$. Since $\overline{\tilde\cB}$ is compact in $L^2(\Omega_1(T), \mathbb R^d)$ (see Remark \ref{remark1} again), there exist finitely many points $b_1,\ldots,b_n\in\overline{\tilde\cB}$ and $r_1,\ldots, r_n>0$ such that
\begin{equation}
\label{factlemma1}
\overline{\tilde\cB}\subset\bigcup_{i=1}^nB(b_i, r_i)\cap\overline{\tilde\cB},
\end{equation}
and, for $a_i=a(b_i)$,
$$
\Lambda(x, m, t, a_i, \zeta)\geq\frac{3\theta}{4}\ \ \text{for all}\ \zeta\in B(b_i, r_i)\cap\overline{\tilde\cB}.
$$
Since $\tilde\cB$ is dense in $\overline{\tilde\cB}$, we can assume, without loss of generality, that \eqref{factlemma1} holds, for suitable $r_1,\ldots,r_n>0$, with the centers $b_i\in\tilde\cB$ for every $i=1,\ldots,n$. 
Now define $\Psi:\tilde\cB\lra A$ by setting
$$
\Psi(b)=a_k\ \ \text{if}\ b\in B(b'_k, r'_k)\setminus\bigcup_{i=1}^{k-1}B(b'_i, r'_i).
$$
Observe that for any $\beta\in\cB(t)$ and $s\in[t, T]$, we have that $\Psi(\beta(\cdot, s))$ is measurable as the composition of two measurable functions, and then
we can define $\gamma^*\in\Gamma(t)$ by setting 
$$
\gamma^*[\beta](s):=\Psi(\beta(\cdot, s)).
$$
By definition of $\Psi$, we have
$$
\Lambda(x, m, t, \Psi(b), b)\geq\frac{3\theta}{4}\quad\text{for all}\ b\in\tilde\cB,
$$
and by the continuity of $\Lambda(x, m, t, a, \cdot)$, by general estimates on the trajectory $y_{(x, t)}$ and by \eqref{continuitam}, there exists $\tau>t$ such that
\begin{equation}
\label{ultimalambda}
\Lambda(y_{(x, t)}(s), m(\cdot, s; \beta, t, \bar m), s, \gamma^*[\beta](s), \beta(\cdot, s))\geq\frac{\theta}{2}\quad\text{for}\ t\leq s< \tau \ \text{and for each } \beta\in\cB(t).
\end{equation}
By integrating both sides of \eqref{ultimalambda} from $t$ to $\tau$, we obtain \eqref{tesilemma} for $\tau$ sufficiently close to $t$.
\end{proof}
\begin{proof}[Proof of Theorem \ref{esistenza}]
At first we prove that $\underbar V$ is a subsolution (see Definition \ref{viscositysolutiondef}) of \eqref{HJI}. Let $\phi\in C^1(\R^d\times L^2(\R^d)\times[0, T])$ and $(\bar x, \bar m, \bar t)$ be a local maximum point w.r.t. $\tilde X$ for $\underbar V-\phi$, and $\underbar V(\bar x, \bar m, \bar t)=\phi(\bar x, \bar m, \bar t)$. We stress again that any trajectory starting from $(\bar x, \bar m, \bar t)$ does not exit from $\tilde X$. We assume by contradiction that \eqref{primaipotesilemma} holds and then, by Lemma \ref{lemmaxstrategia}, there exists $\gamma^*\in\Gamma(\bar t)$ such that, for all $\beta\in\cB(\bar t)$ and all $\tau$ sufficiently close to $\bar t$,
\begin{multline}
\label{perparti}
\int_{\bar t}^{\tau}\ell(y_{(\bar x, \bar t)}, m(\cdot, s; \beta, \bar t, \bar m), s, \gamma^*[\beta], \beta(\cdot, s))ds\\
+\phi(y_{(\bar x, \bar t)}(\tau), m(\cdot, \tau; \beta, \bar t, \bar m), \tau)-\phi(\bar x, \bar m, \bar t)\leq-\frac{\theta(\tau-\bar t)}{4},
\end{multline}
where $y_{(\bar x, \bar t)}=y_{(\bar x, \bar t)}(\cdot, \gamma^*[\beta])$. Since $\underbar V-\phi$ has a local maximum at $(\bar x, \bar m, \bar t)$ and $\underbar V(\bar x, \bar m, \bar t)=\phi(\bar x, \bar m, \bar t)$, by classical estimates on the trajectory $y_{(\bar x, \bar t)}$ and by \eqref{continuitam}, we have
$$
\phi(y_{(\bar x, \bar t)}(\tau), m(\cdot, \tau; \beta, \bar t, \bar m), \tau)-\phi(\bar x, \bar m, \bar t)\geq \underbar V(y_{(\bar x, \bar t)}(\tau), m(\cdot, \tau; \beta, \bar t, \bar m), \tau)-\phi(\bar x, \bar m, \bar t)
$$
for $\tau$ sufficiently close to $\bar t$. Plugging this into \eqref{perparti}, we obtain
\begin{multline*}
\inf_{\gamma\in\Gamma(\bar t)}\sup_{\beta\in\cB(\bar t)}\left\{\int_{\bar t}^{\tau}\ell(y_{(\bar x, \bar t)}, m(\cdot, s; \beta, \bar t, \bar m), s, \gamma^*[\beta], \beta)ds+\underbar V(y_{(\bar x, \bar t)}(\tau), m(\cdot, \tau; \beta, \bar t, \bar m), \tau)\right\}
\\
-\underbar V(\bar x, \bar m, \bar t)\leq-\frac{\theta(\tau-\bar t)}{4}<0,
\end{multline*}
which contradicts the inequality ``$\leq$'' in \eqref{dpp}. Then $\underbar V$ is a subsolution (see Definition \ref{viscositysolutiondef}) of \eqref{HJI}.
\par\smallskip
Next we show that $\underbar V$ is a supersolution (see Definition \ref{viscositysolutiondef}) of \eqref{HJI}. Let $\phi\in C^1(\R^d\times L^2(\R^d)\times[0, T])$ and $(\bar x, \bar m, \bar t)$ be a local minimum point for $\underbar V-\phi$, and $\underbar V(\bar x, \bar m, \bar t)=\phi(\bar x, \bar m, \bar t)$. Suppose by contradiction that
$$
-\phi_t(\bar x, \bar m, \bar t)+H(\bar x, \bar m, \bar t, D_x\underbar V, D_m\underbar V)=-\theta<0.
$$
By the definition of $H$, there exists $b^*\in\tilde\cB$ such that
$$
-\phi_t(\bar x, \bar m, \bar t)-f(\bar x, a)\cdot D_x\phi(\bar x, \bar m, \bar t)+\langle D_m\phi(\bar x, \bar m, \bar t), \div(b^*\bar m)\rangle_{L^2(\Omega_1(T))}-\ell(\bar x, \bar m, \bar t, a, b^*)\leq -\theta
$$
for all $a\in A$. For $\tau$ sufficiently close to $\bar t$ and any $\gamma\in\Gamma(\bar t)$, we have
\begin{multline*}
-\phi_t(y_{(\bar x, \bar t)}(s), m(\cdot, s; b^*, \bar t, \bar m), s)-f(y_{(\bar x, \bar t)}(s), \gamma[b^*](s))\cdot D_x\phi(y_{(\bar x, \bar t)}(s), m(\cdot, s; b^*, \bar t, \bar m), s)\\
+\langle D_m\phi(y_{(\bar x, \bar t)}(s), m(\cdot, s; b^*, \bar t, \bar m), s), \div (b^*m(\cdot, s; b^*, \bar t, \bar m))\rangle_{L^2(\Omega_1(T))}\\
-\ell(y_{(\bar x, \bar t)}(s), m(\cdot, s; b^*, \bar t, \bar m), s, \gamma[b^*](s), b^*)\leq-\frac{\theta}{2}
\end{multline*}
for every $\bar t\leq s<\tau$, where $y_{(\bar x, \bar t)}(s)=y_{(\bar x, \bar t)}(s; \gamma[b^*])$. By integrating from $\bar t$ to $\tau$, and by \eqref{ode1} and \eqref{contequationformass}, we get
\begin{multline*}
\phi(\bar x, \bar m, \bar t)-\phi(y_{(\bar x, \bar t)}(\tau), m(\cdot, \tau; b^*, \bar t, \bar m), \tau)-\int_{\bar t}^{\tau}\ell(y_{(\bar x, \bar t)}(s), m(\cdot, s; b^*, \bar t, \bar m), s, \gamma[b^*](s), b^*)ds\\
\leq -\frac{\theta(\tau-\bar t)}{4},
\end{multline*}
for $\tau$ sufficiently close to $\bar t$. From
$$
\phi(\bar x, \bar m, \bar t)-\phi(y_{(\bar x, \bar t)}(\tau), m(\cdot, \tau; b^*, \bar t, \bar m), \tau)\geq \underbar V(\bar x, \bar m, \bar t)-\underbar V(y_{(\bar x, \bar t)}(\tau), m(\cdot, \tau; b ^*, \bar t, \bar m), \tau),
$$
we obtain
\begin{multline*}
\underbar V(y_{(\bar x, \bar t)}(\tau), m(\cdot, \tau; b^*, \bar t, \bar m), \tau)+\int_{\bar t}^{\tau}\ell(y_{(\bar x, \bar t)}(s), m(\cdot, s; b^*, \bar t, \bar m), s, \gamma[b^*](s), b^*)ds\\
\geq \frac{\theta(\tau-\bar t)}{4}+\underbar V(\bar x, \bar m, \bar t)
\end{multline*}
and thus
\begin{multline*}
\inf_{\gamma\in\Gamma(\bar t)}\sup_{\beta\in\cB(\bar t)}\Bigg\{\int_{\bar t}^{\tau}\ell(y_{(\bar x, \bar t)}(s), m(\cdot, s; \beta, \bar t, \bar m), s, \gamma[\beta](s), \beta(\cdot, s))ds\\
+\underbar V(y_{(\bar x, \bar t)}(\tau; \gamma[\beta]), m(\cdot, \tau; \beta, \bar t, \bar m), \tau)\Bigg\}>\underbar V(\bar x, \bar m, \bar t),
\end{multline*}
which contradicts \eqref{dpp}. This completes the proof that $\underbar V$ is a viscosity solution (see Definition \ref{viscositysolutiondef}) of \eqref{HJI}. 
\end{proof}
Now we want to characterize the lower value function of the problem as the unique solution of the HJI equation \eqref{HJI}.
\begin{theorem}
\label{teoconfronto}
Assume the hypotheses of Theorem \ref{esistenza}. Let $u_1, u_2$ be bounded and uniformly continuous functions and, respectively, viscosity sub- and supersolution (see Definition \ref{viscositysolutiondef}) of
$$
-V_t+H(x, m, t, D_xV, D_mV)=0\quad\text{in }\tilde X.
$$
Therefore if $u_1\leq u_2$ on $\Omega_2(T)\times X(T)\times\{T\}$, then $u_1\leq u_2$ in $\tilde X$.
\end{theorem}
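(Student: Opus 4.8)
The plan is to run the classical doubling-of-variables argument of Crandall--Lions (\cite{crandall1,crandall2,BCD}), exploiting two special features of the present setting: the compactness of $\tilde X$ in $\R^d\times L^2(\R^d)\times[0,T]$, which removes the need for perturbed variational principles, and the fact that the penalization $\|m-n\|_{L^2}^2$ is Fréchet-$C^1$ on the Hilbert space $L^2(\R^d)$, so that it is an admissible test function in the sense of Definition~\ref{viscositysolutiondef}. First I would reduce to a strict subsolution: for $\eta>0$ set $u_1^\eta(x,m,t):=u_1(x,m,t)-\eta(T-t)$. Since $u_1^\eta=u_1$ on the terminal slice, the ordering $u_1^\eta\le u_2$ at $t=T$ is preserved, and a direct check on test functions shows that $u_1^\eta$ satisfies $-\phi_t+H(\dots,D_x\phi,D_m\phi)\le-\eta$ at local maxima; proving $u_1^\eta\le u_2$ and letting $\eta\to0$ then yields the claim. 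Arguing by contradiction, suppose $\delta:=\max_{\tilde X}(u_1^\eta-u_2)>0$ (the maximum is attained by compactness and continuity) and introduce, on $\tilde X\times\tilde X$,
\[
\Phi_\ep(x,m,t,y,n,s):=u_1^\eta(x,m,t)-u_2(y,n,s)-\frac{1}{2\ep}\left(\|x-y\|^2+\|m-n\|_{L^2}^2+|t-s|^2\right),
\]
which attains a maximum at some $(x_\ep,m_\ep,t_\ep,y_\ep,n_\ep,s_\ep)\in\tilde X\times\tilde X$.

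Next I would recall the standard penalization estimates: the quantity $\tfrac{1}{2\ep}(\|x_\ep-y_\ep\|^2+\|m_\ep-n_\ep\|_{L^2}^2+|t_\ep-s_\ep|^2)$ tends to $0$, the maximizers converge (up to a subsequence) to a common diagonal point $(\hat x,\hat m,\hat t)$ realizing $u_1^\eta(\hat x,\hat m,\hat t)-u_2(\hat x,\hat m,\hat t)=\delta$, and $\max\Phi_\ep\to\delta$. Because $u_1^\eta-u_2\le0$ on $\{t=T\}$ while $\delta>0$, we must have $\hat t<T$, hence $t_\ep,s_\ep<T$ for $\ep$ small and the viscosity inequalities apply in the time-interior. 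Freezing the other variable turns the penalization into a genuine $C^1$ test function for each of $u_1^\eta$ and $u_2$; since the penalization is symmetric, both inequalities feature the \emph{same} momenta
\[
p_t=\tfrac{1}{\ep}(t_\ep-s_\ep),\qquad p_x=\tfrac{1}{\ep}(x_\ep-y_\ep),\qquad p_m=\tfrac{1}{\ep}(m_\ep-n_\ep)\in L^2(\R^d).
\]
Subtracting the supersolution inequality for $u_2$ at $(y_\ep,n_\ep,s_\ep)$ from the strict subsolution inequality for $u_1^\eta$ at $(x_\ep,m_\ep,t_\ep)$ makes $-p_t$ cancel and leaves
\[
H(x_\ep,m_\ep,t_\ep,p_x,p_m)-H(y_\ep,n_\ep,s_\ep,p_x,p_m)\le-\eta.
\]

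It then remains to show that the left-hand side is $o(1)$ as $\ep\to0$, which is the heart of the matter. Using the $\min_b\max_a$ structure of \eqref{hamiltonian}, the difference is bounded, after selecting a common pair $(a,b)$ with $b\in\tilde\cB$, by the sum of three terms. The drift term is controlled by $\|p_x\|\,|f(x_\ep,a)-f(y_\ep,a)|\le L\|x_\ep-y_\ep\|^2/\ep\to0$ from the Lipschitz continuity of $f$. The running-cost term is bounded by $\omega_\ell(\|x_\ep-y_\ep\|+\|m_\ep-n_\ep\|_{H^1}+|t_\ep-s_\ep|)$, and although only the $L^2$-gap of the densities is known to vanish, the topological equivalence \eqref{modcontm} on $X$ gives $\|m_\ep-n_\ep\|_{H^1}\le\tilde\omega(\|m_\ep-n_\ep\|_{L^2})\to0$. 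The decisive term is the transport one: with $w:=m_\ep-n_\ep$, an integration by parts (licit since $w\in H^1(\R^d)$ has compact support in $\Omega_1(T)$ and $\div b\in W^{1,\infty}$ by \eqref{campispaziali}) yields
\[
\langle p_m,\div(bw)\rangle_{L^2}=\frac{1}{\ep}\langle w,\div(bw)\rangle_{L^2}=\frac{1}{2\ep}\int_{\R^d}(\div b)\,w^2\,dx,
\]
whence $|\langle p_m,\div(bw)\rangle_{L^2}|\le\frac{M}{2}\,\|w\|_{L^2}^2/\ep\to0$.

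The main obstacle is precisely this transport term: a priori $p_m=(m_\ep-n_\ep)/\ep$ need not stay bounded, so the naive estimate fails; what saves the argument is the skew-symmetry of $b\cdot\nabla$, which after integration by parts replaces the dangerous $\langle w,b\cdot\nabla w\rangle$ by $-\tfrac12\int(\div b)w^2$ and reduces everything to the controlled penalization quantity $\|w\|_{L^2}^2/\ep$. This is the infinite-dimensional structural (monotonicity) condition tailored to transport Hamiltonians, and verifying it is what makes the uniform bounds in \eqref{campispaziali} essential. Collecting the three estimates gives $|H(x_\ep,m_\ep,t_\ep,p_x,p_m)-H(y_\ep,n_\ep,s_\ep,p_x,p_m)|\to0$, contradicting the bound $\le-\eta<0$ for $\ep$ small. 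Hence $\delta\le0$, i.e.\ $u_1^\eta\le u_2$ on $\tilde X$, and letting $\eta\to0$ completes the proof.
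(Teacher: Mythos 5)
Your proposal is correct and follows essentially the same route as the paper: a Crandall--Lions doubling-of-variables argument on the compact set $\tilde X$, reduction to a strict subsolution, and the decisive integration by parts $\langle w,\div(bw)\rangle_{L^2}=\tfrac12\int(\div b)w^2$ that turns the transport term into the controlled penalization quantity, which is exactly the content of the paper's Lemma \ref{lemmaham}, combined with \eqref{modcontm} to upgrade the $L^2$-gap of the densities to an $H^1$-gap for the running-cost modulus. The only cosmetic deviations are your single penalization parameter in place of the paper's three ($\ep,\xi,\a$) and the perturbation $-\eta(T-t)$ in place of $-\eta t$, neither of which changes the argument.
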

For the proof, we need the following lemmas.
\begin{lemma}
\label{lemmaham}
For all $\xi>0$, $p\in\R^d$, $(x_1, m_1, t_1), (x_2, m_2, t_2)\in\tilde X$ and $q=\frac{2(m_1-m_2)}{\xi^2}$, we have
\begin{multline*}
\left|H\left(x_1, m_1, t_1, p, \frac{2(m_1-m_2)}{\xi^2}\right)-H\left(x_2, m_2, t_2, p, \frac{2(m_1-m_2)}{\xi^2}\right)\right|\\
\leq L\|x_1-x_2\|\|p\|+M\frac{\|m_1-m_2\|^2_{L^2(\Omega_1(T))}}{\xi^2}\\
+\omega_{\ell}\left(\|x_1-x_2\|+\|m_1-m_2\|_{H^1(\Omega_1(T))}+|t_1-t_2|\right),
\end{multline*}
where we recall that $L$ is the Lipschitz constant of $f(\cdot, a)$ and $\omega_{\ell}$ is the modulus of continuity of the running cost $\ell$ (see \S\ref{dynprogreg}).
\end{lemma}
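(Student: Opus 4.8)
The plan is to reduce the whole estimate to a uniform bound on the expression inside the $\min_b\max_a$, and then to split that expression into three pieces, of which only the transport term is delicate. First I record the elementary stability of $\min\max$ under perturbations of the payoff. Writing
\[
\Xi_i(a,b):=-f(x_i,a)\cdot p+\langle q,\div(bm_i)\rangle_{L^2(\Omega_1(T))}-\ell(x_i,m_i,t_i,a,b),
\]
so that $H(x_i,m_i,t_i,p,q)=\min_{b\in\tilde\cB}\max_{a\in A}\Xi_i(a,b)$, one has
\[
\left|H(x_1,m_1,t_1,p,q)-H(x_2,m_2,t_2,p,q)\right|\le\sup_{a\in A,\ b\in\tilde\cB}\left|\Xi_1(a,b)-\Xi_2(a,b)\right|,
\]
because, the optimization set $A\times\tilde\cB$ being the same for both Hamiltonians, both $\max_a$ and $\min_b$ are $1$-Lipschitz for the sup-norm in $(a,b)$ (if $\Xi_1\le\Xi_2+\delta$ pointwise then $\min_b\max_a\Xi_1\le\min_b\max_a\Xi_2+\delta$, and symmetrically).

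Next I would split $\Xi_1-\Xi_2$ into the kinetic term $-(f(x_1,a)-f(x_2,a))\cdot p$, the transport term $\langle q,\div(b(m_1-m_2))\rangle_{L^2(\Omega_1(T))}$, and the cost term $-(\ell(x_1,m_1,t_1,a,b)-\ell(x_2,m_2,t_2,a,b))$. The first is bounded by $L\|x_1-x_2\|\,\|p\|$ using the Lipschitz continuity of $f(\cdot,a)$ uniform in $a$, and the third by $\omega_{\ell}(\|x_1-x_2\|+\|m_1-m_2\|_{H^1(\Omega_1(T))}+|t_1-t_2|)$ by the uniform modulus of continuity of $\ell$. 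These are exactly the first and third terms of the claim, and both are uniform in $(a,b)$.

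The heart of the proof is the transport term. Set $w:=m_1-m_2$, which (both $m_i$ being extended by zero outside their supports) is compactly supported in $\Omega_1(T)$, and substitute $q=2w/\xi^2$. Expanding $\div(bw)=(\div b)\,w+b\cdot\nabla w$ and integrating by parts over $\R^d$ — where the boundary contribution vanishes because $w^2$ is compactly supported — gives $\int w\,(b\cdot\nabla w)=\tfrac12\int b\cdot\nabla(w^2)=-\tfrac12\int(\div b)\,w^2$, whence $\langle w,\div(bw)\rangle_{L^2}=\tfrac12\int_{\Omega_1(T)}(\div b)\,w^2$. Therefore
\[
\left|\langle q,\div(bw)\rangle_{L^2(\Omega_1(T))}\right|=\frac{1}{\xi^2}\left|\int_{\Omega_1(T)}(\div b)\,w^2\right|\le\frac{\|\div b\|_{L^{\infty}(\Omega_1(T))}}{\xi^2}\|w\|_{L^2(\Omega_1(T))}^2\le\frac{M}{\xi^2}\|m_1-m_2\|_{L^2(\Omega_1(T))}^2,
\]
using $\|\div b\|_{W^{1,\infty}}\le M$ from \eqref{campispaziali}. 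This is the middle term, and combining the three uniform bounds over $(a,b)$ yields the lemma.

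The main obstacle is precisely this last computation, and it explains the particular choice $q=2(m_1-m_2)/\xi^2$. A crude bound $|\langle q,\div(bw)\rangle|\le\|q\|_{L^2}\|\div(bw)\|_{L^2}$ would involve $\|w\|_{H^1}$, hence a quantity of order $\|w\|_{L^2}\|w\|_{H^1}/\xi^2$, which cannot be absorbed as $\xi\to0$ in the doubling-of-variables argument of Theorem \ref{teoconfronto}. The cancellation produced by the integration by parts — the skew-symmetry of the transport operator tested against its own argument — leaves only the zeroth-order $L^2$ quantity $\tfrac12\int(\div b)\,w^2$, which \emph{is} controllable; the sole point requiring care is the vanishing of the boundary term, guaranteed by the compact support of $w$ in $\Omega_1(T)$.
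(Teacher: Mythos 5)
Your proof is correct and follows essentially the same route as the paper: the decisive step in both is the integration by parts $\langle m_1-m_2,\div(b(m_1-m_2))\rangle_{L^2}=\tfrac12\int_{\Omega_1(T)}(\div b)(m_1-m_2)^2\,dx$, justified by the compact support of $m_1-m_2$ in $\Omega_1(T)$ and the bound $\|\div b\|_{W^{1,\infty}}\le M$. The only cosmetic difference is that you package the reduction to a pointwise estimate as a general sup-norm Lipschitz property of $\min_b\max_a$, whereas the paper selects a near-optimal $b'$ for the first Hamiltonian and then an optimal $a'$ for the second at that $b'$ — the same argument written out by hand.
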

\begin{proof}
Let us fix $b'\in\tilde\cB$ such that
\begin{multline*}
H\left(x_1, m_1, t_1, p, \frac{2(m_1-m_2)}{\xi^2}\right)\\
\geq -f(x_1, a)\cdot p+\left\langle \frac{2(m_1-m_2)}{\xi^2}, \div(b'm_1)\right\rangle_{L^2(\Omega_1(T))}-\ell(x_1, m_1, t_1, a, b')\quad\text{for every }a\in A.
\end{multline*}
Then take $a'\in A$ such that
\begin{multline*}
H\left(x_2, m_2, t_2, p, \frac{2(m_1-m_2)}{\xi^2}\right)\\
\leq-f(x_2, a')\cdot p+\left\langle \frac{2(m_1-m_2)}{\xi^2}, \div(b'm_2)\right\rangle_{L^2(\Omega_1(T))}-\ell(x_2, m_2, t_2, a', b').
\end{multline*}
Therefore
\begin{multline*}
\left|H\left(x_2, m_2, t_2, p, \frac{2(m_1-m_2)}{\xi^2}\right)-H\left(x_1, m_1, t_1, p, \frac{2(m_1-m_2)}{\xi^2}\right)\right|\\
\leq\left|-f(x_2, a')\cdot p+\left\langle \frac{2(m_1-m_2)}{\xi^2}, \div(b'm_2)\right\rangle_{L^2(\Omega_1(T))}-\ell(x_2, m_2, t_2, a', b')\right.\\
\left.+f(x_1, a')\cdot p-\left\langle \frac{2(m_1-m_2)}{\xi^2}, \div(b'm_1)\right\rangle_{L^2(\Omega_1(T))}+\ell(x_1, m_1, t_1, a', b')\right|\\
\leq\Bigg|(f(x_1, a')-f(x_2, a'))\cdot p+\left\langle \frac{2(m_1-m_2)}{\xi^2}, \div(b'(m_2-m_1))\right\rangle_{L^2(\Omega_1(T))}\\
+\omega_{\ell}\left(\|x_1-x_2\|+\|m_1-m_2\|_{H^1(\Omega_1(T))}+|t_1-t_2|\right)\Bigg|\\
\leq \|f(x_1, a')-f(x_2, a')\|\|p\|+\left|\frac{1}{\xi^2}\int_{\Omega_1(T)}\div(b')(m_1-m_2)^2dx\right|\\
+\omega_{\ell}\left(\|x_1-x_2\|+\|m_1-m_2\|_{H^1(\Omega_1(T))}+|t_1-t_2|\right)\\
\leq L\|x_1-x_2\|\|p\|+M\frac{\|m_1-m_2\|^2_{L^2(\Omega_1(T))}}{\xi^2}\\
+\omega_{\ell}\left(\|x_1-x_2\|+\|m_1-m_2\|_{H^1(\Omega_1(T))}+|t_1-t_2|\right),\\
\end{multline*}
where in the second-to-last inequality we have integrated by parts and used the fact that $m_1,m_2$ vanish at $\partial\Omega_1(T)$. 
\end{proof}
\begin{lemma}
\label{lemmaext}
If $u_1, u_2$ are respectively sub and supersolution (see Definition \ref{viscositysolutiondef}) of \eqref{HJI} in $\tilde X$, they are also sub and supersolution in $\Omega_2(t)\times X(t)\times[0, T[$. More precisely, the viscosity inequalities hold if the maximum or minimum points are on $\Omega_2(0)\times X(0)\times\{0\}$.
\end{lemma}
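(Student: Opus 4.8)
The plan is to isolate the only substantial part of the statement. For a local extremum $(\bar x,\bar m,\bar t)\in\tilde X$ with $\bar t\in(0,T)$, the required viscosity inequality is already granted by the assumed sub/supersolution property on $\tilde X$: such a point is interior in time, and even when $(\bar x,\bar m)$ lies on the spatial boundary of the slice $\Omega_2(\bar t)\times X(\bar t)$ no extra condition is needed, since $\tilde X$ is viable for all admissible trajectories (as discussed before Definition \ref{viscositysolutiondef}). The final slice $\{t=T\}$ is deliberately excluded from the conclusion, because there the terminal datum $V=\psi$ governs. Thus the content to be proved is that the viscosity inequalities persist at local extrema sitting on the initial slice $\Omega_2(0)\times X(0)\times\{0\}$, where perturbations in time are one-sided; I would obtain this by the classical time-perturbation trick.

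For the subsolution part, let $\phi\in C^1(\R^d\times L^2(\R^d)\times[0,T])$ and let $(\bar x,\bar m,0)$ be a local maximum of $u_1-\phi$ relative to $\tilde X$, normalised so that $(u_1-\phi)(\bar x,\bar m,0)=0$. I fix a small neighbourhood $N$ of $(\bar x,\bar m,0)$ with $t<T$ throughout and, for $\varepsilon>0$, set $\phi_\varepsilon:=\phi+\varepsilon/t$. Since $u_1-\phi_\varepsilon=(u_1-\phi)-\varepsilon/t\to-\infty$ as $t\to0^+$, the maximum of $u_1-\phi_\varepsilon$ over $\overline N\cap\tilde X$ is attained at a point $(x_\varepsilon,m_\varepsilon,t_\varepsilon)$ with $t_\varepsilon>0$; the local maximality of $u_1-\phi$ at the base point forces $t_\varepsilon\to0$ and $(x_\varepsilon,m_\varepsilon)\to(\bar x,\bar m)$ as $\varepsilon\to0$ (after localising $\phi$ by a quadratic penalty to render the maximum strict, if necessary), so for small $\varepsilon$ this point is a local maximum relative to $\tilde X$ at an interior time $t_\varepsilon\in(0,T)$. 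Applying the subsolution property with the test function $\phi_\varepsilon$, and using that $\phi_\varepsilon-\phi$ depends only on $t$ (so $D_x\phi_\varepsilon=D_x\phi$, $D_m\phi_\varepsilon=D_m\phi$ and $\partial_t\phi_\varepsilon=\phi_t-\varepsilon/t_\varepsilon^2$), gives
\begin{equation*}
-\phi_t(x_\varepsilon,m_\varepsilon,t_\varepsilon)+\frac{\varepsilon}{t_\varepsilon^2}+H(x_\varepsilon,m_\varepsilon,t_\varepsilon,D_x\phi,D_m\phi)\le 0.
\end{equation*}
Discarding the nonnegative term and letting $\varepsilon\to0$ yields $-\phi_t(\bar x,\bar m,0)+H(\bar x,\bar m,0,D_x\phi,D_m\phi)\le0$.

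The supersolution case is entirely symmetric: one perturbs with $\phi_\varepsilon:=\phi-\varepsilon/t$, so that $u_2-\phi_\varepsilon\to+\infty$ as $t\to0^+$ and the minimum over $\overline N\cap\tilde X$ migrates to some $t_\varepsilon>0$; the supersolution inequality there reads $-\phi_t-\varepsilon/t_\varepsilon^2+H\ge0$, and passing to the limit $\varepsilon\to0$ produces $-\phi_t+H\ge0$ at $(\bar x,\bar m,0)$.

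The delicate points are the migration and convergence of the perturbed extrema and the passage to the limit inside $H$. The blow-up of $\pm\varepsilon/t$ at $t=0$ is exactly what forces $t_\varepsilon>0$, while $t_\varepsilon\to0$ and $(x_\varepsilon,m_\varepsilon)\to(\bar x,\bar m)$ follow from comparing the perturbed optimal value with the value at the base point, a routine argument. For the limit in $H$ one needs $m_\varepsilon\to\bar m$ in a topology making $H$ continuous; this is ensured because all the relevant points lie in the compact set $\tilde X$, where by \eqref{modcontm} convergence in $L^2(\Omega_1(T))$ and in $H^1(\Omega_1(T))$ are equivalent, and the Hamiltonian \eqref{hamiltonian} is jointly continuous in its arguments (continuity and boundedness of $f$ and $\ell$, together with the equicontinuity of $b\mapsto\div(bm)$ over the compact $\overline{\tilde\cB}$, cf. Lemma \ref{lemmaham}). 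This is the main, though standard, obstacle.
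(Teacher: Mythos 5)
Your proof is correct and takes essentially the same route as the paper: the paper's proof of Lemma \ref{lemmaext} is just a one-line reference to \cite{BCD}, Ch.~II, \S2, Lemma 2.10, whose argument is precisely the $\pm\varepsilon/t$ time-perturbation you carry out (migration of the extremum to an interior time, application of the interior viscosity inequality with the perturbed test function, discarding the term of favourable sign, and passage to the limit using the compactness of $\tilde X$ and the continuity of $H$). Nothing essential is missing.
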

\begin{proof}
The proof goes similarly as the one of \cite{BCD}, Ch. II, \S2, Lemma 2.10.
\end{proof}
\par\smallskip\noindent
{\it Proof (of Theorem \ref{teoconfronto}).}
The aim of the proof is to show that $G=\sup_{\tilde X}(u_1-u_2)$ is less or equal to $0$. We argue by contradiction assuming that $G>0$.
\par
In order to simplify the proof, we change $u_1$ with $(u_1(x, m, t))_{\eta}:=u_1(x, m, t)-\eta t$ for some $\eta>0$ sufficiently small, in such a way that, without loss of generality, we may assume that $u_1$ is a strict subsolution (see Definition \ref{viscositysolutiondef}) of \eqref{HJI} since $(u_1)_{\eta}$ is a subsolution of 
\begin{equation}
\label{eqreduced}
-\frac{\partial}{\partial t}(u_1)_{\eta}+H(x, t, m, D_x(u_1)_{\eta}, D_m(u_1)_{\eta})\leq-\eta<0\quad\text{in}\ \tilde X.
\end{equation}
To perform the proof, it is sufficient to show that $(u_1)_{\eta}\leq u_2$ in $\tilde X$ for any $\eta$ and then to let $\eta$ tend to $0$. Notice also that we still have $(u_1)_{\eta}\leq u_2$ on $\Omega_2(T)\times X(T)\times\{T\}$. To simplify the notation, and since the proof is clearly reduced to compare $(u_1)_{\eta}$ and $u_2$, we drop the $\eta$ and use the notation $u_1$ instead of $(u_1)_{\eta}$.
\par
Next, we consider the difficulty with $\Omega_2(0)\times X(0)\times\{0\}$: a priori, we do not know if $u_1\leq u_2$ on this part of the boundary and a maximum point of $u_1-u_2$ can be located there. It is solved by Lemma \ref{lemmaext}.
\par
Now, since $u_1$ and $u_2$ are not smooth, we need an argument in order to be able to use the definition of viscosity solution (see Definition \ref{viscositysolutiondef}). We introduce then the test function
\begin{multline*}
\Psi_{\ep, \xi, \a}(x', m', t', x'', m'', t'')=u_1(x', m', t')-u_2(x'', m'', t'')\\
-\frac{\|x'-x''\|^2}{\ep^2}-\frac{\|m'-m''\|^2_{L^2(\Omega_1(T))}}{\xi^2}-\frac{|t'-t''|^2}{\a^2}.
\end{multline*}
Since $\Psi_{\ep, \xi, \a}$ is continuous on $\tilde X\times\tilde X$, there exists a maximum point $(\tilde x', \tilde m', \tilde t', \tilde x'', \tilde m'', \tilde t'')$, and we set $\bar G:=\Psi_{\ep, \xi, \a}(\tilde x', \tilde m', \tilde t', \tilde x'', \tilde m'', \tilde t'')$. We prove that
\begin{itemize}
\item[$(1)$] when $\ep, \xi, \s\to0$, then $\bar G\to G$;
\item[$(2)$] $u_1(\tilde x', \tilde m', \tilde t')-u_2(\tilde x'', \tilde m'', \tilde t'')\to G$ as $\ep, \xi, \a\to0$;
\item[$(3)$] we have
$$
\frac{\|x'-x''\|^2}{\ep^2},\frac{\|m'-m''\|^2_{L^2(\Omega_1(T))}}{\xi^2},\frac{|t'-t''|^2}{\a^2}\to0\quad\text{as }\ \ep, \xi, \a\to0;
$$
\item[$(4)$] $(\tilde x', \tilde m', \tilde t'), (\tilde x'', \tilde m'', \tilde t'')\in\tilde X$.
\end{itemize}
Since $(\tilde x', \tilde m', \tilde t', \tilde x'', \tilde m'', \tilde t'')$ is a maximum point of $\Psi_{\ep, \xi, \a}$, for any $(x', m', t'), (x'', m'', t'')\in\tilde X$, we have
\begin{multline*}
u_1(x', m', t')-u_2(x'', m'', t'')-\frac{\|x'-x''\|^2}{\ep^2}-\frac{\|m'-m''\|^2_{L^2(\Omega_1(T))}}{\xi^2}-\frac{|t'-t''|^2}{\a^2}\\
\leq u_1(\tilde x', \tilde m', \tilde t')-u_2(\tilde x'', \tilde m'', \tilde t'')-\frac{\|\tilde x'-\tilde x''\|^2}{\ep^2}-\frac{\|\tilde m'-\tilde m''\|^2_{L^2(\Omega_1(T))}}{\xi^2}-\frac{|\tilde t'-\tilde t''|^2}{\a^2}=\bar G.
\end{multline*}
We choose $x'=x''$, $m'=m''$ and $t'=t''$ in the left-hand side to get
$$
u_1(x', m', t')-u_2(x', m', t')\leq\bar G\quad\text{for all }(x', m', t')\in\tilde X,
$$
and, by taking the supremum over $(x', m', t')$, we obtain the inequality $G\leq\bar G$.
\par
Since $u_1$ and $u_2$ are bounded, we can set $R:=\max\{\|u_1\|_{\infty}, \|u_2\|_{\infty}\}$ and we also have, by arguing in an analogous way,
\begin{multline*}
G\leq u_1(\tilde x', \tilde m', \tilde t')-u_2(\tilde x'', \tilde m'', \tilde t'')-\frac{\|\tilde x'-\tilde x''\|^2}{\ep^2}-\frac{\|\tilde m'-\tilde m''\|^2_{L^2(\Omega_1(T))}}{\xi^2}-\frac{|\tilde t'-\tilde t''|^2}{\a^2}\\
\leq 2R-\frac{\|\tilde x'-\tilde x''\|^2}{\ep^2}-\frac{\|\tilde m'-\tilde m''\|^2_{L^2(\Omega_1(T))}}{\xi^2}-\frac{|\tilde t'-\tilde t''|^2}{\a^2}.
\end{multline*}
Recalling that $G>0$, we deduce 
\begin{equation}
\label{minoreR}
\frac{\|\tilde x'-\tilde x''\|^2}{\ep^2}+\frac{\|\tilde m'-\tilde m''\|^2_{L^2(\Omega_1(T))}}{\xi^2}+\frac{|\tilde t'-\tilde t''|^2}{\a^2}\leq 2R.
\end{equation}
In particular, $\|\tilde x'-\tilde x''\|, \|\tilde m'-\tilde m''\|_{L^2(\Omega_1(T))}, |\tilde t'-\tilde t''|\to0$ as $\ep, \xi, \a\to0$.
\par
Now, since $\tilde X$ is compact in $\R^d\times L^2(\R^d)\times[0, T]$, we may assume, without loss of generality, that the points $(\tilde x', \tilde m', \tilde t'), (\tilde x'', \tilde m'', \tilde t'')$ converge to the same one: we deduce from this property that
$$
\liminf (u_1(\tilde x', \tilde m', \tilde t')-u_2(\tilde x'', \tilde m'', \tilde t''))\leq\limsup(u_1(\tilde x', \tilde m', \tilde t')-u_2(\tilde x'', \tilde m'', \tilde t''))\leq G,
$$
and the inequality
\begin{multline}
\label{disugmax}
G\leq u_1(\tilde x', \tilde m', \tilde t')-u_2(\tilde x'', \tilde m'', \tilde t'')-\frac{\|\tilde x'-\tilde x''\|^2}{\ep^2}-\frac{\|\tilde m'-\tilde m''\|^2_{L^2(\Omega_1(T))}}{\xi^2}-\frac{|\tilde t'-\tilde t''|^2}{\a^2}\\
\leq u_1(\tilde x', \tilde m', \tilde t')-u_2(\tilde x'', \tilde m'', \tilde t'')
\end{multline}
implies that necessarily $\lim (u_1(\tilde x', \tilde m', \tilde t')-u_2(\tilde x'', \tilde m'', \tilde t''))=G$. Finally, because again of \eqref{disugmax},
$$
\bar G=u_1(\tilde x', \tilde m', \tilde t')-u_2(\tilde x'', \tilde m'', \tilde t'')-\frac{\|\tilde x'-\tilde x''\|^2}{\ep^2}-\frac{\|\tilde m'-\tilde m''\|^2_{L^2(\Omega_1(T))}}{\xi^2}-\frac{|\tilde t'-\tilde t''|^2}{\a^2}\to G
$$
and, since $u_1(\tilde x', \tilde m', \tilde t')-u_2(\tilde x'', \tilde m'', \tilde t'')\to G$, we immediately get
$$
-\frac{\|\tilde x'-\tilde x''\|^2}{\ep^2}-\frac{\|\tilde m'-\tilde m''\|^2_{L^2(\Omega_1(T))}}{\xi^2}-\frac{|\tilde t'-\tilde t''|^2}{\a^2}\to0,
$$
and we have proved $(1)$, $(2)$ and $(3)$.
\par
For $(4)$, it is enough to observe that, if the point $(x', m', t')$ is a limit of a subsequence of $(\tilde x', \tilde m', \tilde t'),(\tilde x'', \tilde m'', \tilde t'')$, then $u_1(x', m', t')-u_2(x', m', t')=M>0$ and therefore $(x', m', t')$ cannot be on $\Omega_2(T)\times X(T)\times\{T\}$.
\par
Now we assume that $\ep, \xi, \a$ are sufficiently small in such a way that $(4)$ holds. Since $(\tilde x', \tilde m', \tilde t', \tilde x'', \tilde m'', \tilde t'')$ is a maximum point of $\Psi_{\ep, \xi, \a}$, $(\tilde x', \tilde m', \tilde t')$ is a maximum point of the function
$$
(x', m', t')\longmapsto u_1(x', m', t')-\phi^1(x', m', t'),
$$
where
$$
\phi^1(x', m', t')=u_2(\tilde x'', \tilde m'', \tilde t'')+\frac{\|x'-\tilde x''\|^2}{\ep^2}+\frac{\|m'-\tilde m''\|^2_{L^2(\Omega_1(T))}}{\xi^2}+\frac{|t'-\tilde t''|^2}{\a^2},
$$
but $u_1$ is a viscosity subsolution (see Definition \ref{viscositysolutiondef}) of \eqref{eqreduced} and $(\tilde x', \tilde m', \tilde t')\in\tilde X$. Therefore
\begin{multline*}
-\phi^1_t(\tilde x', \tilde m', \tilde t')+H(\tilde x', \tilde m', \tilde t', D_x\phi^1(\tilde x', \tilde m', \tilde t'), D_m\phi^1(\tilde x', \tilde m', \tilde t'))\\
=\frac{2(\tilde t''-\tilde t')}{\a^2}+H\left(\tilde x', \tilde m', \tilde t', \frac{2(\tilde x'-\tilde x'')}{\ep^2}, \frac{2(\tilde m'-\tilde m'')}{\xi^2}\right)\leq-\eta.
\end{multline*}
In the same way, $(\tilde x'', \tilde m'', \tilde t'')$ is a maximum point of the function
$$
(x'', m'', t'')\longmapsto-u_2(x'', m'', t'')+\phi^2(x'', m'', t''),
$$
where
$$
\phi^2(x'', m'', t'')=u_1(\tilde x', \tilde m', \tilde t')-\frac{\|\tilde x'-x''\|^2}{\ep^2}-\frac{\|\tilde m'-m''\|^2_{L^2(\Omega_1(T))}}{\xi^2}-\frac{|\tilde t'-t''|^2}{\a^2},
$$
and hence $(\tilde x'', \tilde m'', \tilde t'')$ is a minimum point of the function $u_2-\phi^2$. But $u_2$ is a viscosity supersolution (see Definition \ref{viscositysolutiondef}) of \eqref{HJI} and $(\tilde x'', \tilde m'', \tilde t'')\in\tilde X$, and therefore
\begin{multline*}
-\phi^2_t(\tilde x'', \tilde m'', \tilde t'')+H(\tilde x'', \tilde m'', \tilde t'', D_x\phi^2(\tilde x'', \tilde m'', \tilde t''), D_m\phi^2(\tilde x'', \tilde m'', \tilde t''))\\
=\frac{2(\tilde t''-\tilde t')}{\a^2}+H\left(\tilde x'', \tilde m'', \tilde t', \frac{2(\tilde x'-\tilde x'')}{\ep^2}, \frac{2(\tilde m'-\tilde m'')}{\xi^2}\right)\geq0.
\end{multline*}
Then we subtract the two viscosity inequalities to obtain
\begin{multline*}
H\left(\tilde x', \tilde m', \tilde t', \frac{2(\tilde x'-\tilde x'')}{\ep^2}, \frac{2(\tilde m'-\tilde m'')}{\xi^2}\right)\\
-H\left(\tilde x'', \tilde m'', \tilde t'', \frac{2(\tilde x'-\tilde x'')}{\ep^2}, \frac{2(\tilde m'-\tilde m'')}{\xi^2}\right)\leq-\eta.
\end{multline*}
Now, by using Lemma \ref{lemmaham}, we get
\begin{multline*}
H\left(\tilde x', \tilde m', \tilde t', \frac{2(\tilde x'-\tilde x'')}{\ep^2}, \frac{2(\tilde m'-\tilde m'')}{\xi^2}\right)-H\left(\tilde x'', \tilde m'', \tilde t'', \frac{2(\tilde x'-\tilde x'')}{\ep^2}, \frac{2(\tilde m'-\tilde m'')}{\xi^2}\right)\\
\leq 2L\frac{\|\tilde x'-\tilde x''\|^2}{\ep^2}+M\frac{\|\tilde m'-\tilde m''\|^2_{L^2(\Omega_1(T))}}{\xi^2}\\
+\omega_{\ell}\left(\|\tilde x'-\tilde x''\|+\|\tilde m'-\tilde m''\|_{H^1(\Omega_1(T))}+|\tilde t'-\tilde t''|\right)\leq-\eta.
\end{multline*}
But, on one hand, $\|\tilde x'-\tilde x''\|, \|\tilde m'-\tilde m''\|_{L^2(\Omega_1(T))}, |\tilde t'-\tilde t''|\to0$ as $\ep, \xi, \a\to0$ (see \eqref{minoreR}) and hence also $\|\tilde m'-\tilde m''\|_{H^1(\Omega_1(T))}\to0$ as $\xi\to0$ since \eqref{modcontm} holds. On the other hand
$$
2L\frac{\|\tilde x'-\tilde x''\|^2}{\ep^2}+M\frac{\|\tilde m'-\tilde m''\|^2_{L^2(\Omega_1(T))}}{\xi^2}\to0\quad\text{when }\ep,\xi\to0.
$$
The above inequality leads us to a contradiction.
$\hfill\square$
\section{Comments on two one-dimensional examples}
\label{esempio1dim}
In this section, we exhibit two one-dimensional examples ($d=1$) and we somehow provide some comments on them, trying to have information on the problems from the corresponding Isaacs equations. However, we have to say that, in the first example, our comments will be not exhaustive and are to be considering as guess for the understanding of the optimal behavior of the players. Indeed, even in simple situations, the differential game between a player and a mass, as studied in the current paper, presents many non-trivial aspects and so many possible admissible and optimal behaviors which are much sensible to the initial conditions. In particular, this happens when the optimal behavior for the mass involves a change of the shape of the function $m$, that is it is not a rigid movement. The first example below concerns the case where the optimal behavior involves such a change. The second one concerns a case where such a change is not involved, the optimal behavior of the mass is a rigid movement and then it is easier to guess the Frechet differential w.r.t. $m$ of the value.
\subsection{First example}
We consider the following one-dimensional example. The dynamics of the single player is given by $x'=\alpha$ with $\alpha:[0,T]\lra[-c,c]=A$ the measurable control and $c>0$. The control $\beta$ for the mass is given by $\beta:[0,T]\lra{\cal B}$ with 
$$
{\cal B}=\left\{b:\mathbb{R}\lra\mathbb{R}:\|b\|_\infty\le c, \|b_x\|_\infty\le c_1\right\},
$$
where $c$ is as for the single player and $c_1$ is such that $b$ can assume values from $c$ to $-c$, or vice-versa, in an interval of length $2r$. Here $r>0$ is a datum of the problem, being the final cost defined as 
$$
\Psi(x,m)=\int_{x-r}^{x+r}m(\xi)d\xi.
$$
The differential game is indeed of the Mayer type: we only have a final cost and no running cost, $\ell\equiv0$. The lower value function is then defined as
\begin{equation}
\label{eq:lower_value_example}
\underbar V(x,m,t)=\inf_\gamma\sup_\beta\int_{x(T)-r}^{x(T)+r}m(\xi,T)d\xi,
\end{equation}
and the Hamilton-Jacobi-Isaacs problem is
\begin{equation}
\label{eq:H_example}
\begin{cases}
-\underbar V_t+\sup_{a\in[-c,c]}\left\{-a\underbar V_x\right\}+\inf_{b\in{\cal B}}\left\{\langle D_m\underbar V,(bm)_\xi\rangle_{L^2}\right\}=0,\\
\underbar V(x,m,T)=\Psi(x,m)
\end{cases}.
\end{equation}
Note that the Isaacs equation is decoupled in the controls (the Isaacs condition holds: the lower Isaacs equation and the upper Isaacs equation coincide) and hence it is reasonable to expect that the lower value $\underbar V$ will turn out to be equal to the upper value function and so to the min-max equilibrium of a possible differential game, where also the mass uses non-anticipating strategies (see the comments in the Introduction). In the sequel we then denote the lower value function \eqref{eq:lower_value_example} simply by $\text{V}$.
\par
We now make the following assumption (whose validity and reasonableness are discussed below): the value function is given by
\begin{equation}
\label{eq:guess}
\text{V}(x,m,t)=\int_{x-r-h_\ell(t)}^{x+r+h_r(t)}m(\xi)d\xi,
\end{equation}
where the functions $h_\ell,h_r$ ($\ell$ stands for ``left'' and $r$ for ``right'') are two time-dependent absolutely continuous functions satisfying the backward ordinary differential equations
\begin{equation}
\label{eq:h}
\begin{cases}
h'_r(t)=-2c{\cal H}\Big(m(x-r-h_\ell(t))-m(x+r+h_r(t))\Big),\\
h_r(T)=0,\\
h'_\ell(t)=-2c{\cal H}\Big(m(x+r+h_r(t))-m(x-r-h_\ell(t))\Big),\\
h_\ell(T)=0,\\
h'_\ell(t)h'_r(t)=0,\\
h'_\ell(t)+h'_r(t)=-2c
\end{cases},
\end{equation}
where the Heaviside function $\cal H$ is defined as ${\cal H}(\xi)=1$, when $\xi>0$, and ${\cal H}(\xi)=0$, when $\xi<0$. In the case $\xi=0$ the Heaviside function is non-defined or possibly multi-valued, but the last two lines of \eqref{eq:h} imply that, when $m(x+r+h_r(t))-m(x-r-h_\ell(t))=0$, the quantities $h'_r(t),h'_\ell(t)$ are equal to zero and equal to $-2c$, respectively.
\par
Let us assume that a solution of \eqref{eq:h} exists and prove that \eqref{eq:guess} gives a solution of \eqref{eq:H_example}. The final condition is easily verified. We have
\begin{equation}
\label{calcoli}
\begin{array}{ll}
\displaystyle
\text{V}_t(x,m,t)=m(x+r+h_r(t))h'_r(t)+m(x-r-h_\ell(t))h'_\ell(t),\\
\displaystyle
\text{V}_x(x,m,t)=m(x+r+h_r(t))-m(x-r-h_\ell(t)),\\
\displaystyle
D_m\text{V}(x,m,t)=\chi_{[x-r-h_\ell(t),x+r+h_r(t)]},
\end{array}
\end{equation}
where $\chi_{[x-r-h_\ell(t),x+r+h_r(t)]}$ is the characteristic function of the interval $[x-r-h_\ell(t),x+r+h_r(t)]$. Plugging \eqref{calcoli} into \eqref{eq:H_example} we get
\begin{multline}
\label{eq:H_V_example}
-m(x+r+h_r(t))h'_r(t)-m(x-r-h_\ell(t))h'_\ell(t)\\
+c\Big|m(x+r-h_r(t))-m(x-r-h_\ell(t))\Big|-c\Big(m(x+r+h_r(t))+m(x-r-h_\ell(t))\Big)=0,
\end{multline}
where in the last line we have used
\begin{multline*}
\inf_{b\in{\cal B}}\langle D_mV,(bm)_\xi\rangle=\inf_{b\in{\cal B}}\int_{x-r-h_\ell(t)}^{x+r+h_r(t)}(bm)_\xi d\xi\\
=\inf_{b\in{\cal B}}\Big(b(x+r+h_r(t))m(x+r+h_r(t))-b(x-r-h_\ell(t))m(x-r-h_\ell(t))\Big)\\
=-c\Big(m(x+r+h_r(t))+m(x-r-h_\ell(t))\Big)
\end{multline*}
because $m\ge0$, $-c\le b\le c$ and $b$ can assume values from $c$ to $-c$, or vice-versa, in any interval of length larger than $2r$ (as the interval $[x-r-h_\ell(t),x+r+h_r(t)]$ certainly is). Since $(h_\ell,h_r)$ satisfies \eqref{eq:h}, if $m(x+r-h_r(t))-m(x-r-h_\ell(t))>0$, then \eqref{eq:H_V_example} becomes
\begin{multline*}
2cm(x-r-h_\ell(t))+c\Big(m(x+r-h_r(t))-m(x-r-h_\ell(t))\Big)\\
-c\Big(m(x+r+h_r(t))+m(x-r-h_\ell(t))\Big)=0.
\end{multline*}
Similarly if $m(x+r-h_r(t))-m(x-r-h_\ell(t))<0$. If instead $m(x+r-h_r(t))-m(x-r-h_\ell(t))=0$, for any possible choice of $(h'_\ell(t),h'_r(t))=(-2c,0)$ or $(h'_\ell(t),h'_r(t))=(0,-2c)$, we get \eqref{eq:H_V_example} vanishing.
\par\smallskip
{\it Modeling considerations}. We look from the point of view of the mass, that is looking for its optimal strategy. Let $t\longmapsto x(t)$ be the trajectory chosen by the single player starting from $x_0$, say at $t=0$. Note that the goal of the single player is to minimize the quantity of mass' agents inside the interval $[x(T)-r,x(T)+r]$ at the final time $T$, whereas the goal of the mass is to maximize it.
The agents of the mass that are already inside the interval $[x_0-r,x_0+r]$ at $t=0$ will move jointly with the single player. The other agents will try to enter the interval $[x(t)-r,x(t)+r]$ at a certain time $t\le T$ and then to stay in $[x(\tau)-r,x(\tau)+r]$ for all remaining $t\le\tau\le T$. The single player cannot avoid the agents entering its reference interval. However, having the same maximal velocity $c$ it can avoid the ones coming from one of the two sides: just moving in one of the two directions at the maximal velocity. Note that, in this case, the agents coming from the other side are entering with a relative velocity of value $2c$, because they move at the maximal velocity $c$ towards $x(t)$. We can adopt a ``static point of observation'' where the player is not moving and the initial mass is not changing, but instead the interval of reference for the player is growing with left and right extremals moving as $t\longmapsto x_0-r-h_\ell(t)$ and $t\longmapsto x_0+r+h_r(t)$, for two suitably chosen time dependent function $h_\ell,h_r$. If such functions, from a backward point of view, satisfy \eqref{eq:h}, then they are optimal and \eqref{eq:guess} gives the value function. Moreover, an optimal strategy for the mass, still in this static point of view, is, at time $t$, 
\begin{equation}
\label{eq:mass_strategy}
b(\xi)=
\begin{cases}
c&\mbox{if } \xi\le x-r-h_\ell(t),\\
linear&\mbox{if } x-r-h_\ell(t)\le\xi\le x+r+h_r(t),\\
-c&\mbox{if } \xi\ge x+r+h_r(t),
\end{cases}
\end{equation}
as indeed detected by the last line of \eqref{eq:H_V_example} and the comments after it.  Note that the ``static point of view'', as well as being a classical approach for pursuit-evasion games (\cite{isa, pattur}, adopting the reference frame of one of the two players), seems more convenient due to the feature of the game as ``accumulation of agents around the player''. However, from a non-static point of view, \eqref{eq:mass_strategy} just means, for the single mass' agent being in the position $\xi\in\mathbb{R}$ at time $t$: if you are at the right of the interval $[x(t)-r,x(t)+r]$, then move with maximal velocity $c$ towards the left; if you are at the left of the interval $[x(t)-r,x(t)+r]$, then move with maximal velocity $c$ towards the right; if you are already inside the interval $[x(t)-r,x(t)+r]$, then move in order to stay inside the intervals $[x(\tau)-r,x(\tau)+r]$ for all subsequent $\tau$.
\par\smallskip
{\it Comments on the hypothesis \eqref{eq:guess} (and \eqref{eq:h}, existence of the solution)}. The hypothesis \eqref{eq:guess}, despite it enlightens some good points of the model, presents some critical issues. The hypothesis that the functions $h_\ell,h_r$ only depend on $t$ is not generally satisfied. Indeed those functions satisfying \eqref{eq:h} are somehow an optimal feedback for the single player and then they should depend on $x$ and $m$ too. In the equations \eqref{eq:h} there should also be the derivative of $h_\ell,h_r$ with respect to $x$ and their Fr\'echet differential with respect to $m$. This also reflects on the fact that a solution $(h_\ell,h_r)$ of \eqref{eq:h}, as it is written above, may not exists, for some profiles of $m$. The model studied in this section and the proposed solution are then just an ansatz of what the general situation should look like. Then, this simple model, which is representative of the overall problem and its difficulties, already deserves further future studies. In any case, whenever \eqref{eq:h} has a solution for any $x$ and $m$, then \eqref{eq:guess} solves \eqref{eq:H_example} and, by uniqueness, is the value function.
\par\smallskip
{\it A simple possible situation with $h_\ell,h_r$ depending only on $t$.} We end this subsection describing a simple favorable situation. Before that we stress again that the goal of the present subsection is just to have some guess on the optimal behavior which, even in this easy example, presents too many non-trivial aspects to be evaluated with mere ``hand-based'' argument. If the initial distribution $m$ is non-increasing in $[x-r-2cT,x+r+2cT]$ for some $x\in\mathbb{R}$, then, for that $x$, \eqref{eq:h} has a solution given by $h_\ell\equiv0$ and $h_r(t)=2c(T-t)$. The value function is then
$$
\text{V}(x,m,t)=\int_{x-r}^{x+r+2c(T-t)}m(\xi)d\xi,
$$
which indeed means that, in a non-static point of view, the player, starting from $x$, moves at the maximal velocity $c$ towards the right and the mass adopts the strategy described above. Moreover, if we suppose that $m$ is strictly decreasing in $[x-r-2cT-\sigma,x+r+2cT+\sigma]$ for some $\sigma>0$, then, even if we perturb $x$ and $m$ a bit (remember that we are working in the set $X$ as in Section \ref{HJIforV} where the convergence of $m$ is also a uniform convergence), the optimal behavior $(h_\ell,h_r)$ does not change, i.e. it is not affected by such a perturbation. This means that their derivatives with respect to $x$ and $m$ are null and that they are still solution of \eqref{eq:h}.
\subsection{Second example}
Let us consider a one-dimensional Mayer problem, where the dynamics of the single player, its measurable controls and the set $\cB$ of the controls of the mass are the same as the example in the subsection \S5.1, and the only change is in the final cost which is now given by
$$
\psi(x, m)=\left(x-\int_{\R}\xi m(\xi)d\xi\right)^2.
$$
Besides the hypotheses on \S4, we also consider only initial states for the mass such that $\int_{\R}md\xi=1$ and $m\geq0$. Note that such properties are maintained by the solutions of the continuity equation (1) and are compatible with the arguments in \S4. The single player $x$ wants to minimize the cost
$$
J(x, m, t, \a, \b)=\psi(x(T), m(T))
$$
and the mass $m$ wants to maximize it, that is the single player at the final time $T$ wants to be close to the expectation of the distribution of the agents and the contrary for the mass.
\par\medskip\noindent
\par
Again, the problem solved by the value of the game $V$ is
\begin{equation}
\label{hji}
\begin{cases}
-V_t+\max_{a\in[-c, c]}\{-V_xa\}+\inf_{b\in\tilde\cB}\langle D_mV, (bm)_{\xi}\rangle=0,\\
V(x, m, T)=\psi(x, m).
\end{cases}
\end{equation}
We assert that the value is
\begin{equation}
\label{valuefunctionsecondexample}
V(x, m, t)=\psi(x, m)=\left(x-\int_{\R}\xi m(\xi)d\xi\right)^2
\end{equation}
independently of $t$, that is anyone of the two opponents cannot improve its outcome w.r.t. initial one.
\par
We prove that $V$ solves \eqref{hji}. We have
\begin{align*}
V_t&=0\quad\text{for every}\ t;\\
V_x&=2\left(x-\int_{\R}\xi m(\xi)d\xi\right);\\
D_mV&=-2\left(x-\int_{\R}\xi m(\xi)d\xi\right)\xi.
\end{align*}
We set $z:=2\left(x-\int_{\R}\xi m(\xi)d\xi\right)\in\R$. Let us assume that $z\geq0$. Then we have
\begin{multline*}
-V_t+\max_{a\in[-c, c]}\{-V_xa\}+\inf_{b}\langle D_mV, (bm)_{\xi}\rangle=\max_{a\in[-c, c]}\{-za\}+\inf_b\int_{\R}(-z\xi)(bm)_{\xi}d\xi\\
=zc+z\inf_b\left(-\xi bm\Big{|}^{\xi=+\infty}_{\xi=-\infty}+\int_{\R}bmd\xi\right)=zc+z\inf_b\int_{\R}bmd\xi=zc+z\int_{\R}(-c)md\xi=zc-zc=0.
\end{multline*}
Clearly, the final datum is satisfied. The calculation with $z\leq0$ goes similarly. Hence $V$ is the solution of \eqref{hji} and, by uniqueness, it is the value of the game.
\par\smallskip
{\it On the optimal strategies.} The fact that the value of the game is \eqref{valuefunctionsecondexample} and the calculation above show that the optimal strategies, for both opponents, are realized when they move simultaneously at the maximum velocity towards a suitable (left or right) direction. If this does not happen, one of the two opponents can surely improve its outcome at the cost of the other one.


\end{document}